\newtheorem{theorem}{Theorem}
\newtheorem{lemma}[theorem]{Lemma}
\newtheorem{prop}[theorem]{Proposition}
\theoremstyle{definition}
\theoremstyle{remark}
\newtheorem{remark}[theorem]{Remark}
\newcommand{\bR}{\mathbb{R}}
\newcommand{\bH}{\mathbb{H}}
\newcommand{\cD}{\mathcal{D}}
\newcommand{\cF}{\mathcal{F}}
\newcommand{\cM}{\mathcal{M}}
\newcommand{\cT}{\mathcal{T}}
\newcommand{\dg}{\dot{g}}
\newcommand{\doh}{\dot{h}}
\newcommand{\dA}{\dot{A}}
\newcommand{\Diff}{\mathrm{Diff}}
\newcommand{\geod}{\mathrm{geod}}
\newcommand{\cg}{\mathfrak{g}}
\newcommand{\vol}{\mathrm{vol}}
\newcommand{\Vol}{\mathrm{Vol}}
\newcommand{\sym}{\mathrm{sym}}
\newcommand{\Tr}{\mathrm{Tr}}
\newcommand{\Volr}{\Vol_R}
\newcommand{\Ric}{\operatorname{Ric}}
\newcommand{\circR}{\mathring{R}}
\begin{document}
\title[Convexity of the renormalized volume]{Convexity of the renormalized volume of hyperbolic $3$-manifolds}
\author{Sergiu Moroianu}
\thanks{Partially supported by the CNCS project PN-II-RU-TE-2012-3-0492.}
\address{Sergiu Moroianu, Institutul de Matematic\u{a} al Academiei Rom\^{a}ne\\ P.O. Box 1-764\\ RO-014700 Bucharest\\
Romania}
\email{moroianu@alum.mit.edu}
\date{\today}
\begin{abstract}
The Hessian of the renormalized volume of geometrically finite hyperbolic 
$3$-manifolds without rank-$1$ cusps, computed at the hyperbolic metric $g_\geod$
with totally geodesic boundary of the convex core, is shown to be a strictly 
positive bilinear form on the tangent space to Teichm\"uller space. 
The metric $g_\geod$ is known from results of Bonahon and Storm to be an absolute minimum 
for the volume of the convex core.
We deduce the strict convexity of the functional volume of the convex core at its minimum point.
\end{abstract}
\maketitle

\section{Introduction}

The renormalized volume is a functional on the moduli space of 
hyperbolic $3$-mani\-folds of finite geometry. It has been introduced 
in this context by Krasnov \cite{Kr}, after initial work by Henningson 
and Skenderis \cite{hs} for more general Poincar\'e-Einstein manifolds. 
As $3$-dimensional geometrically finite $3$-manifolds are closely 
related to Riemann surfaces,  $\Volr$ defines in a natural way
a K\"ahler potential for the Weil-Petersson symplectic form on 
the Teichm\"uller space. This follows for quasi-fuchsian manifolds
by the identity between the renormalized volume and the so-called 
classical Liouville action functional, a topological quantity known 
by work of Takhtadzhyan and
Zograf \cite{TaZo} to provide a K\"ahler potential. For geometrically 
finite hyperbolic $3$-manifolds without rank-$1$ cusps, the 
K\"ahler property 
of the renormalized volume was proved by Colin Guillarmou and 
the author in \cite{CS}, by constructing a Chern-Simons theory 
on the Teichm\"uller space. The case of cusps of rank $1$ is
studied in a joint upcoming paper with Guillarmou and Fr\'ed\'eric 
Rochon.

Here we look at a certain moduli space of complete, infinite-volume hyperbolic 
metrics $g$ on a fixed $3$-manifold $X$. The metrics we consider are
geometrically finite quotients $\Gamma\backslash \bH^3$ 
(i.e., they admit a fundamental polyhedron with 
finitely many faces) and do not have cusps of rank $1$, in the sense that
every parabolic subgroup of $\Gamma$, if any, must have rank $2$. 
We define the moduli space $\cM$ as the quotient of the above set of metrics
on $X$ by the group $\Diff^0(X)$ of diffeomorphisms isotopic to the identity.
The existence of such metrics on $X$ implies that $X$ is
diffeomorphic to the interior of a manifold-with-boundary $K$. 
Let $2K$ be the smooth manifold obtained by doubling $K$ across $\Sigma$. 
We make the following assumption throughout the paper: 
\begin{quote}
\emph{There exists on $2K$ a complete hyperbolic metric 
of finite volume.}
\end{quote}
It follows from Mostow-Prasad rigidity that 
up to a diffeomorphism of $2K$ isotopic to the identity, the boundary $\Sigma=\partial K$
is totally geodesic for this metric. Since $2K$ must be aspherical 
 and atoroidal, the connected components of $\Sigma$ 
cannot be spheres or tori.

Examples of manifolds where our 
assumption is \emph{not} fulfilled are quasi-fuchsian manifolds 
and Schottky manifolds, since their 
double is not atoroidal. With the above assumption, 
a distinguished point $g_\geod$ in $\cM$ is obtained 
from $K$ by gluing infinite-volume \emph{funnels} with vanishing Weingarten
operator (see Section \ref{sf}) 
to each boundary component of $K$. We call this metric the 
\emph{totally geodesic metric}, 
and note that $K$ is the convex core of $(X,g_\geod)$. It was remarked 
by Thurston, again as a simple consequence of Mostow rigidity, 
that $g_\geod$ is the unique metric in $\cM$ with smooth boundary 
of the convex core.

By work of Bonahon \cite{bon} it is known that the volume of the convex 
core $\Vol(C(X,g))$ has a minimum at $g_\geod$ when viewed as a 
functional on $\cM$. When $X$ is convex co-compact, i.e., without cusps,
Storm \cite{storm} proved that the minimum point $g_\geod$ is strict.
We shall apply here our results on $\Volr$ to deduce the 
convexity of $\Vol(C(X,g))$ at this special point in $\cM$ for $X$ 
geometrically finite without cusps of rank $1$, but possibly with 
cusps of rank $2$ as in \cite{bon}.

It is instructive to compare those results to the situation for 
quasi-fuchsian manifolds. Combining results of Schlenker \cite{Sch} 
and Brock \cite{Brk}, the renormalized volume of quasi-fuchsian manifold 
is commensurable on Teichm\"uller space to the volume of the convex core.
In particular, it is \emph{not} proper as a function on 
Teichm\"uller space, since it remains bounded under iterations 
of a Dehn twist. 
It has been stated without proof by Krasnov and Schlenker
\cite{KS08} that $\Volr$ is non-negative on the quasi-fuchsian space. 
There is some compelling evidence for this claim: it was proved in
\cite{KS08} that the only critical point in a Bers slice is 
at the fuchsian locus, and there the Hessian of the renormalized 
volume equals a multiple of the Weil-Petersson scalar product. 
However, the lack 
of properness does not allow one to conclude that $\Volr$ is 
globally non-negative. In a recent joint paper with Corina Ciobotaru, 
we proved that $\Volr$
is non-negative on the \emph{almost-fuchsian} space, an open 
neighborhood of the fuchsian locus inside the quasi-fuchsian 
space, and that it vanishes there only at the fuchsian locus.

Schlenker's results from \cite{Sch} have been recently 
extended to convex co-compact hyperbolic $3$-manifolds 
by Bridgeman and Canary \cite{BC}. They obtain quite nice global
results bounding the renormalized volume in terms of the convex core.

The main result of this paper describes the local behavior of $\Volr$ near $g_\geod$.
\begin{theorem}\label{main}
Let $g_\geod$ be a geometrically finite hyperbolic metric on $X$ without 
rank $1$-cusps and with totally geodesic boundary of 
the convex core. Then the Hessian of the renormalized volume functional 
on $\cM$ at $g_\geod$ is positive definite.
\end{theorem}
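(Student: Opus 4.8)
The plan is to combine the first variation formula for $\Volr$ with a second-order analysis at $g_\geod$, where the relevant first-order data degenerate. Recall that, via the conformal boundary map, $\cM$ is identified with the Teichm\"uller space of the (possibly disconnected) surface $\partial_\infty X$, and that on it $\Volr$ is smooth with Krasnov--Schlenker differential (see \cite{KS08,Sch})
\[
d\Volr(\mu)\;=\;-\tfrac14\int_{\partial_\infty X}\langle\, \II^*_0,\ \dot h_\mu\,\rangle\, dA ,
\]
where $\dot h_\mu$ is the variation, induced by a Teichm\"uller tangent vector $\mu$, of the curvature $-1$ representative $h$ of the conformal infinity, and $\II^*_0$ is the trace-free second fundamental form at infinity of $(X,g)$; equivalently $\II^*_0=-\operatorname{Re}\Phi$, with $\Phi$ the holomorphic quadratic differential giving the Schwarzian of the projective structure at infinity relative to the Fuchsian uniformization. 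The first step is that $g_\geod$ is a critical point: for $g_\geod$ the manifold is the union of the convex core $K$, whose boundary is totally geodesic, with funnels of vanishing Weingarten operator, and a funnel induces the uniformizing projective structure on its boundary at infinity; hence $\Phi=0$ and $\II^*_0=0$ at $g_\geod$, so $d\Volr(g_\geod)=0$.

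Because $\II^*_0$ vanishes at $g_\geod$, differentiating the first variation formula annihilates every term except the one in which the derivative lands on $\II^*_0$:
\[
\operatorname{Hess}_{g_\geod}\Volr(\mu,\mu)\;=\;-\tfrac14\int_{\partial_\infty X}\langle\, \dot{\II}^*_0,\ \dot h_\mu\,\rangle\, dA ,
\]
so everything reduces to the linearization $\dot{\II}^*_0=-\operatorname{Re}\dot\Phi$ of the Schwarzian map at $g_\geod$. The infinitesimal deformation of the hyperbolic structure realizing $\mu$ is, in a suitable (DeTurck) gauge, a transverse-traceless solution $\dot g$ of the linearized Einstein equation on $(X,g_\geod)$; in each funnel its Fefferman--Graham expansion $\dot g_x=\dot g_{(0)}+x^2\dot g_{(2)}+x^3\dot g_{(3)}+\dots$ has $\dot g_{(0)}=\dot h_\mu$, $\dot g_{(2)}$ locally determined by $\dot g_{(0)}$, and $\dot g_{(3)}$ a fixed nonzero multiple of $\dot{\II}^*_0$. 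Thus $\operatorname{Hess}_{g_\geod}\Volr(\mu,\mu)$ is a nonzero multiple of the boundary-at-infinity pairing $\int_{\partial_\infty X}\langle\dot g_{(3)},\dot g_{(0)}\rangle\, dA$. This is a \emph{nonlocal} functional of $\mu$: the tensor $\dot g_{(3)}$ in a given funnel is pinned down only by solving the linearized Einstein equation over all of $X$ and matching the funnel data across the convex core $K$.

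The core of the proof is the sign of this pairing. Writing the linearized Einstein equation in transverse-traceless gauge as $\nabla^*\nabla\dot g+c_0\dot g=0$, a renormalized Green (Stokes) identity on $X$ --- which has no interior boundary, being complete --- shows that the boundary-at-infinity pairing equals a nonzero constant times the renormalized energy $\int_X^{\mathrm{ren}}\bigl(|\nabla\dot g|^2+c_0|\dot g|^2\bigr)$. One then proves that this renormalized energy is strictly positive whenever $\dot g\neq0$, via a Hardy-type spectral estimate for the Lichnerowicz-type operator on transverse-traceless tensors over a geometrically finite hyperbolic $3$-manifold; since $\mu\neq0$ forces $\dot h_\mu=\dot g_{(0)}\neq0$ and hence $\dot g\neq0$, the pairing is nonzero for every $\mu\neq0$, and, up to a positive constant, $\operatorname{Hess}_{g_\geod}\Volr(\mu,\mu)$ equals this positive renormalized energy --- so it is positive definite. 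In fact the resulting quadratic form can be identified with a positive multiple of the Weil--Petersson norm of $\mu$, consistently with the K\"ahler potential property of $\Volr$ proved in \cite{CS}, which, however, only pins down the $(1,1)$-part of the Hessian and so does not by itself suffice.

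The step I expect to be the main obstacle is exactly the renormalized Green identity together with the positivity of the renormalized energy: one must fix the right weighted function spaces for $\dot g$, carry out the Fefferman--Graham renormalization of the divergent energy integral (keeping track of the conformal anomaly in dimension $2$, which here is topological), and establish the spectral bound for the linearized Einstein operator; and all of this has to be carried out in the presence of rank-$2$ cusps, where the asymptotics, the cuspidal ends, and the class of admissible deformations require a separate and delicate treatment so that the Stokes identity receives no contribution from the cusps.
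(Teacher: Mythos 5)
Your reduction of the problem is reasonable as far as it goes: $g_\geod$ is indeed a critical point, and at a critical point the Hessian is $-\tfrac14\int\langle\dot{\II}^*_0,\dot h_\mu\rangle$, which can be rewritten as a boundary pairing of Fefferman--Graham coefficients of a gauged solution of the linearized Einstein equation. But the step you yourself flag as ``the main obstacle'' --- the renormalized Green identity plus strict positivity of the renormalized energy --- is not a technical afterthought; it \emph{is} the theorem, and your proposal gives no argument for it. There are two concrete reasons it does not follow from what you wrote. First, on trace-free tensors over a hyperbolic $3$-manifold the linearized operator is $\nabla^*\nabla-2$, so your constant $c_0$ is negative and the energy density $|\nabla\dot g|^2+c_0|\dot g|^2$ is not pointwise nonnegative; one needs the Weitzenb\"ock identity $\nabla^*\nabla q_0=(d^\nabla{d^\nabla}^*+{d^\nabla}^*d^\nabla+3)q_0$ to convert it into a manifestly nonnegative expression, and you never invoke anything of this kind. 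Second, and more seriously, even with a nonnegative integrand the integral over $X$ diverges, since $\dot g$ grows like $e^{2t}$ in the funnels ($\dot g_{(0)}\neq0$), and the finite part of a divergent integral of a positive quantity need not be positive --- so ``spectral bound $\Rightarrow$ renormalized energy $>0$'' is a non sequitur. Resolving this is precisely where all the work lies, and your proposal leaves it open. (A smaller gap: to get positive \emph{definiteness} on $T\cT_\Sigma$ you must also show that a nonzero Teichm\"uller direction cannot produce $\dot g\equiv0$ after gauging, which requires an injectivity argument you do not supply.)

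For comparison, the paper avoids renormalization at infinity altogether. It truncates at the minimal surface near the convex-core boundary, where $\Volr(X,g^s;h^s_\infty)=\Vol(K,g^s)$ exactly; the Hessian of $\Vol(K)$ is then a genuine (convergent) Dirichlet-to-Neumann pairing on the compact manifold $K$ with totally geodesic boundary, made nonnegative by the Weitzenb\"ock formula after putting $q=\dot g+L_Vg$ in Bianchi gauge via the boundary-value problem \eqref{Dp}. The discrepancy between this renormalization and the canonical one (curvature $-4$ at infinity) is controlled by the Polyakov-type formula \eqref{chvr}, whose second variation contributes the explicitly nonnegative term $\tfrac14\int_\Sigma\Tr(\dA^2)$; definiteness then follows from the finite-dimensional factorization argument for $\doh\mapsto q$. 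If you want to pursue your route at infinity, you would essentially have to reprove these estimates in renormalized form, which is strictly harder than the compact-boundary argument the paper uses.
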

The proof is done in two steps. First we look at the volume 
enclosed by minimal surfaces 
near the boundary of the convex core, proving that it is convex, 
and then compare it to the ``optimal'' renormalization with respect to the unique 
hyperbolic metric in the conformal class at infinity. 
In the first step we use a boundary-value problem for the linearized Einstein 
equation in Bianchi gauge at a metric with geodesic boundary. 
The Hessian of the volume appears as a Dirichlet-to-Neumann operator, 
which we prove to be strictly positive by an appropriate Weitzenb\"ock formula. 
The second step uses the analysis of the uniformizing conformal factor,
together with some elementary elliptic theory.

As a consequence, we obtain the convexity of the convex core functional:

\begin{theorem}\label{coro}
Let $(X,g_\geod)$ be a geometrically finite hyperbolic $3$-manifold 
with totally geodesic boundary of the convex core, and without 
rank-$1$ cusps. Then for metrics $g\in\cM$ near $g_\geod$, the Hessian 
at $g_\geod$ of the functional
\begin{align*}
\cM\longrightarrow\bR,&&g\longmapsto\Vol(C(X,g))
\end{align*}
is positive definite as a bilinear form on $T_{g_\geod}\cM$.
\end{theorem}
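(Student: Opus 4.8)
The plan is to deduce Theorem \ref{coro} from Theorem \ref{main} by a soft comparison. I would look for a functional $\Psi$ on $\cM$, defined and twice differentiable near $g_\geod$, such that $\Psi(g)\le\Vol(C(X,g))$ for $g$ near $g_\geod$, with equality at $g_\geod$, and whose Hessian at $g_\geod$ is positive definite on $T_{g_\geod}\cM$. For such a $\Psi$ the difference $\Phi:=\Vol(C(X,\cdot))-\Psi$ is non-negative near $g_\geod$ and vanishes there, so $g_\geod$ is a local minimum of $\Phi$; hence $d\Phi_{g_\geod}=0$ and $\mathrm{Hess}_{g_\geod}\Phi\ge0$ as a bilinear form, and therefore
\begin{align*}
\mathrm{Hess}_{g_\geod}\big(\Vol(C(X,\cdot))\big)=\mathrm{Hess}_{g_\geod}\Psi+\mathrm{Hess}_{g_\geod}\Phi\ \ge\ \mathrm{Hess}_{g_\geod}\Psi,
\end{align*}
the right-hand side being positive definite. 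So the task reduces to producing $\Psi$, granted that $\Vol(C(X,\cdot))$ is twice differentiable at $g_\geod$ --- which is implicit in the statement of Theorem \ref{coro} and follows from the boundary of the convex core being smooth at $g_\geod$ (cf. Bonahon \cite{bon}).

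The natural choice is $\Psi=\Volr$. The inequality $\Volr(X,g)\le\Vol(C(X,g))$ holds for convex co-compact manifolds by Schlenker \cite{Sch} and Bridgeman--Canary \cite{BC}, and I expect it to persist on all of $\cM$: the Epstein-surface and $W$-volume constructions behind it go through near rank-$2$ cusps, which contribute finite volume and leave the conformal boundary unchanged. Equality at $g_\geod$ I would establish by direct computation: at the totally geodesic metric the convex core is $K$, whose boundary $\Sigma$ has vanishing Weingarten operator, and on the glued funnel $\big((0,\infty)\times\Sigma,\ dr^2+\cosh^2 r\,h^*\big)$ --- with $h^*$ the hyperbolic metric of curvature $-1$ on $\Sigma$ --- the conformal representative induced at infinity is $h^*$ itself and the Epstein surface of $h^*$ is exactly $\Sigma=\partial K$; hence the $W$-volume computed with $h^*$ equals $\Vol(K)-\tfrac14\int_\Sigma H\,da=\Vol(K)$, since $H\equiv0$ on $\Sigma$. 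Thus $\Volr(X,g_\geod)=\Vol(C(X,g_\geod))$, and feeding $\Psi=\Volr$ into the comparison above and invoking Theorem \ref{main} proves Theorem \ref{coro}.

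A second, more self-contained choice is $\Psi=\Vol_{\mathrm{min}}$, the volume of the region $N(g)\subset X$ cut off by the minimal surfaces near the components of $\partial C(X,g)$, which is the functional studied in the first step of the proof of Theorem \ref{main}. At $g_\geod$ the totally geodesic $\partial K$ is itself a minimal surface, and its Jacobi operator $-\Delta_\Sigma+2$ has trivial kernel, so by the implicit function theorem $N(g)$ and hence $\Vol_{\mathrm{min}}(g)$ depend smoothly on $g$ near $g_\geod$, with $N(g_\geod)=K$. Every closed minimal surface in $X$ lies in the convex core --- maximum principle for the distance to the convex hull, which is convex along geodesics --- so $N(g)\subseteq C(X,g)$ and $\Vol_{\mathrm{min}}(g)\le\Vol(C(X,g))$, with equality at $g_\geod$. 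Since that first step shows $\mathrm{Hess}_{g_\geod}\Vol_{\mathrm{min}}$ positive definite, the same comparison applies, now without using the inequality $\Volr\le\Vol(C)$ in the cusped case.

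The comparison argument itself is routine; the content is imported from Theorem \ref{main} and the two auxiliary facts about $\Vol(C(X,\cdot))$. The step I expect to be the main obstacle is ensuring enough regularity of the convex-core volume near $g_\geod$ to speak of its Hessian, which rests on smoothness of $\partial C$ there; in the $\Volr$-route, a secondary point is extending $\Volr\le\Vol(C)$ beyond the convex co-compact case. It is worth noting that only a one-sided bound together with equality at the single point $g_\geod$ enters, so the conclusion is insensitive to any constants in quantitative comparisons between the two volumes.
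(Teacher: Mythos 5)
Your first route is exactly the paper's proof: it uses Schlenker's inequality $\Volr(X,g)\le\Vol(C(X,g))$ (extended beyond the convex co-compact case as in Bridgeman--Canary, and noted to survive rank-$2$ cusps), observes equality at $g_\geod$ where $C(X,g_\geod)=K$ has totally geodesic boundary so $\Volr(X,g_\geod)=\Vol(K)$, and transfers the positive-definite Hessian from Theorem \ref{main} by the one-sided comparison you spell out. Your alternative route via the minimal-surface volume is not the one taken in the paper, but the published argument is your $\Volr$ version essentially verbatim (with the same implicit reliance on enough regularity of $\Vol(C(X,\cdot))$ at $g_\geod$ that you flag).
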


By the simultaneous uniformization result of Ahlfors and Bers 
valid for quasi-fuchsian manifolds, extended by Marden \cite{Marden} to the 
geometrically finite case, $\cM$ is identified with the Teichm\"uller space of 
$\Sigma$, keeping in mind that the connected components of $\Sigma$ 
have genus at least $2$. We identify therefore $T\cM$ with $T\cT_\Sigma$. 
Our strategy in proving Theorem \ref{main} will be to bound from below the Hessian of the 
renormalized volume in terms of the Weil-Petersson metric on $\cT_\Sigma$.

\subsection*{Acknowledgments}
This paper originated from discussions with Colin Guillarmou 
and Jean-Marc Schlenker 
about the renormalized volume of hyperbolic $3$-mani\-folds. 
I owe in particular to Jean-Marc the observation
that Theorem \ref{main} has implications 
about the volume of the convex core. 

\section{Funnels}\label{sf}
Let $(X,g)$ be a geometrically finite hyperbolic $3$-manifold without rank-$1$ 
cusps. Such a manifold can be decomposed in a finite-volume part $K$ 
(a smooth manifold-with-boundary with a finite number of cusps of rank $2$), 
and a finite number of funnels. These funnels play an important role in this paper, 
so we review them below.

A \emph{funnel} is a hyperbolic half-cylinder $(F,g)$, where 
$F=[0,\infty)\times\Sigma$, for some compact, 
possibly disconnected Riemannian surface $(\Sigma,h)$, while
\begin{align}\label{fun}
g=dt^2+h_t,&& h_t=h\left((\cosh t +A\sinh t)^2\cdot,\cdot \right).
\end{align}
Here $A$ is a symmetric field of endomorphisms of $T\Sigma$, namely the Weingarten operator of the isometric inclusion 
$\{0\}\times \Sigma\hookrightarrow F$. The tensors $h_t$ are Riemannian metrics on $\Sigma$ whenever $t$ is such that the eigenvalues of $A$
are larger than $-\coth t$. Hence, we must assume that $A+1$ is positive definite in order for $g$ to be well-defined on the whole half-cylinder.
For notational simplicity, we allow disconnected funnels.

The necessary and sufficient conditions for $g$ to be hyperbolic are the hyperbolic version of the Gauss and Codazzi--Mainardi equations:
\begin{align}
&\det(A)=\kappa_h+1, \label{gauss} \\
&(d^\nabla)^* A+d\Tr(A)=0\label{codazzi}
\end{align}
where $\kappa_h$ is the Gaussian curvature of $h$.
Let 
\[H:=\Tr(A):\Sigma\to \bR\] 
be the mean curvature function (without the customary $1/2$ factor) of $\{0\}\times \Sigma\hookrightarrow X$ 
with respect to the direction $\partial_t$ escaping from $K$.
Let $A_t, H_t,\kappa_{h_t}$ be the Weingarten map, the mean curvature, 
respectively the Gaussian curvature, of $\{t\}\times \Sigma\hookrightarrow F$.
We have 
\[A_t=\tfrac{1}{2}g^{-1}\partial_t g=(\cosh t +A\sinh t)^{-1}(\sinh t+A\cosh t)\]
The Gauss and Codazzi-Mainardi equations continue to hold at every $t$, so 
\[\kappa_{h_t}=\det(A_t)-1.\]

\section{Renormalized volumes}

For a geometrically finite metric without rank-$1$ cusps with a 
fixed funnel struc\-tu\-re, we define the induced metric at infinity
on the surface $\Sigma$:
\[h_\infty:=\lim e^{-2t} h_t=\tfrac{1}{4}h\left((1+A)^2\cdot,\cdot \right).\]
The renormalized volume of $(X,g)$ with respect to $h_\infty$ is defined by 
Krasnov and Schlen\-ker \cite{KS08} as
\[\Volr(X,g;h_\infty)=\Vol(K,g)-\tfrac14 \int_\Sigma H dh\]
where $H$ is the trace of $A$. This is the same as the Riesz-regularized 
volume with respect to the boundary-defining function $e^{-t}$, see e.g.\ \cite{GMS},. 
Let $\omega\in C^\infty(\Sigma)$ be the unique conformal factor such that the metric
$e^{2\omega}h_\infty$ is of constant curvature equal to $-4$. Like every metric in the conformal class of 
$h_\infty$, the metric $e^{2\omega}h_\infty$ arises as the metric at infinity for some other funnel structure 
on $(X,g)$. The \emph{renormalized volume of $(X,g)$} is defined (cf.\ Krasnov \cite{Kr})
with respect to this canonical choice:
\[\Volr(X,g):=\Volr(X,g;e^{2\omega}h_\infty).\]
A proof of this equality appears for instance explicitly in \cite{CM}, Prop.\ 5. 
Note that the chosen metric at infinity is not hyperbolic but of curvature $-4$.
so the volume of $(M,e^{2\omega}h_\infty$ equals $\pi(g-1)$. For every other metric 
$h'$ conformal to $h$ and of the same volume, $\Volr(X,g;h')<\Volr(X,g;e^{2\omega}h_\infty)$,
hence the above choice is very natural.

\section{Geometrically finite manifolds with totally geodesic boundary}\label{cccm}

Let $g$ be a hyperbolic metric on $X$ such that the convex core of $X$ has totally geodesic 
boundary, denoted $\Sigma$. Then $A=0$, $H=0$, $h$ is hyperbolic and the induced metric 
at infinity 
\[h_\infty=\lim e^{-2t} h_t=h/4\]
has constant Gaussian curvature equal to $-4$. 
Let $\{g^s\}_{s\in\bR}$ be a one-parameter family of deformations of $g$ inside 
the space of convex co-compact metrics. (This space is parametrized by the deformations 
$[h_\infty^s]$ of the conformal classes
of the induced metrics $h_\infty^s$ at infinity). 

\begin{prop}
For small deformations $g^s$ of $g$, in the homotopy class of $\Sigma$ there exists a unique 
family of surfaces $\Sigma_s$ which are minimal for $g^s$.
\end{prop}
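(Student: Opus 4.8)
The plan is to set this up as an implicit function theorem argument. At $s=0$ the surface $\Sigma$ is totally geodesic in $(X,g)$, hence in particular minimal, and it lies in the convex core where the geometry is compact. So I would first identify the relevant operator: for a surface $\Sigma'$ near $\Sigma$, written as a normal graph $\exp_{\Sigma}(u\cdot\nu)$ for a function $u\in C^{2,\alpha}(\Sigma)$ (using the normal $\nu=\partial_t$ and the collar structure near $\Sigma$), and for a metric $g^s$ near $g$, the mean curvature $H(u,g^s)$ of the graph is a smooth (quasilinear elliptic) function of $(u,s)$ with values in $C^{0,\alpha}(\Sigma)$. We have $H(0,g^0)=0$ since $\Sigma$ is totally geodesic. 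The linearization of $u\mapsto H(u,g^0)$ at $u=0$ is the Jacobi (stability) operator
\[
J=\Delta_\Sigma+\|\II\|^2+\Ric(\nu,\nu).
\]
Since $\Sigma$ is totally geodesic, $\II=0$, and since $(X,g^0)$ is hyperbolic, $\Ric(\nu,\nu)=-2$, so $J=\Delta_\Sigma-2$. This operator is invertible on $C^{2,\alpha}(\Sigma)\to C^{0,\alpha}(\Sigma)$ because $\Delta_\Sigma$ is non-positive, so $\Delta_\Sigma-2$ has spectrum bounded above by $-2<0$; in particular it has trivial kernel. (Here I use that $\Sigma$ is compact, which holds because the convex core $K$ of $(X,g_\geod)$ is compact modulo rank-$2$ cusps, and the minimal surfaces stay in $K$ away from the cusps — one should note the cusp regions contribute no obstruction since the geometry there is a fixed product and $\Sigma$ avoids the thin parts.)

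By the implicit function theorem in Banach spaces, there is a neighborhood of $(0,g^0)$ and a unique smooth map $s\mapsto u_s\in C^{2,\alpha}(\Sigma)$ with $u_0=0$ and $H(u_s,g^s)=0$ for all small $s$; elliptic regularity upgrades $u_s$ to $C^\infty$. The graph of $u_s$ is the desired minimal surface $\Sigma_s$, and it depends smoothly on $s$. Uniqueness in $C^{2,\alpha}$ near $\Sigma$ is part of the IFT statement; to get uniqueness in the stated homotopy class one argues that any minimal surface homotopic to $\Sigma$ and sufficiently close to it (in the Hausdorff sense, say) is automatically a small normal graph, hence coincides with $\Sigma_s$ — this uses that $\Sigma$ is embedded with trivial normal bundle and that for $s$ small the competing surface cannot wander far, e.g.\ by a barrier/maximum principle argument using the convex core boundary or by compactness of the space of minimal surfaces with bounded area.

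The main obstacle is not the local IFT step, which is routine, but rather making the uniqueness clause precise: one must rule out other minimal surfaces in the homotopy class of $\Sigma$ that are not small graphs over $\Sigma$. The cleanest route is to invoke the well-developed theory of minimal surfaces in hyperbolic $3$-manifolds — in particular that in the convex co-compact (or geometrically finite without rank-$1$ cusps) setting, for conformal structures at infinity close to the Fuchsian one the minimal surface in the given isotopy class is unique and close to $\Sigma_\geod$ — together with the least-area/barrier properties that confine such surfaces to a neighborhood of the convex core. I would phrase the proof to lean on the IFT for the existence and smooth dependence, and cite the relevant minimal-surface literature for the global uniqueness within the homotopy class.
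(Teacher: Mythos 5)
Your implicit-function-theorem argument is correct as far as it goes, but it takes a genuinely different route from the paper, and the two approaches distribute the difficulty differently. The paper does not linearize the mean curvature at all: for each component $\Sigma_j$ it cuts out the funnel, completes it to a quasi-fuchsian manifold (equivalently, passes to the cover of $X$ associated to $\pi_1(\Sigma_j)$, which is almost-fuchsian for small $s$ since the eigenvalues of $A^s$ stay close to $0$), and invokes Uhlenbeck's theorem, which gives both existence and uniqueness of the minimal surface in the homotopy class of $\Sigma_j$ in one stroke; it then observes that this surface sits inside the original funnel for small $s$. Your IFT computation -- the Jacobi operator at the totally geodesic $\Sigma$ is $\Delta_\Sigma+\|\II\|^2+\Ric(\nu,\nu)=\Delta_\Sigma-2$, hence invertible -- is sound and buys you existence, smooth dependence on $s$, and uniqueness \emph{among small normal graphs} without any citation, which is arguably more self-contained than the paper for that half of the statement.

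The place where your write-up is not yet a proof is exactly the clause you flag yourself: uniqueness in the full homotopy class. "A barrier/maximum principle argument or compactness of minimal surfaces of bounded area" is a plan, not an argument; you would need (i) the convexity of the equidistant surfaces $\{t\}\times\Sigma$ in the funnels (and an analogous statement in the rank-$2$ cusps) to confine any closed minimal surface homotopic to $\Sigma_j$ to a fixed compact region, and then (ii) a reason why no \emph{other} minimal surface in that region, possibly far from being a graph over $\Sigma_j$, can be homotopic to it. Step (ii) is the real content and does not follow from the IFT. The clean way to close it is precisely the paper's device: any closed minimal surface of $X$ homotopic to $\Sigma_j$ lifts to the quasi-fuchsian cover, where Uhlenbeck's uniqueness for almost-fuchsian manifolds applies. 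So if you keep the IFT for existence, you should still cite Uhlenbeck (applied to the completion/cover) for the uniqueness clause, at which point the IFT becomes optional.
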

\begin{proof}
Note that $\pi_1(\Sigma)$ does not necessarily inject into $\pi_1(X)$.
For each connected component $\Sigma_j$ of $\Sigma$ cut out the funnel containing it and 
complete it to a quasi-fuchsian manifold (this is possible since for small enough $s$, 
the eigenvalues of $A_s$ are close to $0$). In that quasi-fuchsian manifold it is known e.g.\ by 
Uhlenbeck \cite{Uhl} that there exists a unique minimal surface homotopic to $\Sigma_j$. 
This surface will live in the original funnel of $X$ for small enough $s$.
\end{proof}

For $s$ close to $0$ let therefore $\Sigma_s\subset X$ be the unique minimal surface inside 
$X$ homotopic to $\Sigma$.
Choose $\{\Phi_s\}$ a family of diffeomorphisms of $X$ mapping $\Sigma$ onto $\Sigma_s$, with $\Phi_0$ equal to the identity, and furthermore such that
the outgoing geodesics on $\Sigma$ are mapped isometrically on the corresponding geodesics normal to $\Sigma_s$ with respect to the 
metric $g^s$. 
Hence, by pulling back $g^s$ via $\Phi_s$ we may assume that $[0,\infty)\times \Sigma$ 
is the underlying space of every funnel in the family $g^s$, of course with different 
metric $h^s$ and Weingarten map $A^s$. 

By composing with an additional family of diffeomorphisms preserving the surface $\Sigma$ and the funnel structure, we can further assume that
the first-order variation $\doh$ of the metrics $h^s$ induced by $g^s$ on $\Sigma$
(the dot on top of some tensor denotes $s$-derivative at $s=0$) is divergence-free:
\[\delta^h\doh=0.\]
Let $\dA$ be the first-order variation of the Weingarten map.

\begin{lemma}\label{lTT}
The tensors $\dot{h}$ and $\dot{A}$ along $\Sigma$ are trace- and divergence-free.
\end{lemma}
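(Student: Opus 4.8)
The plan is to prove Lemma~\ref{lTT} by differentiating, at $s=0$, the two structural constraints satisfied by a funnel metric: the hyperbolic Gauss equation \eqref{gauss} and the Codazzi--Mainardi equation \eqref{codazzi}, together with the minimality condition on $\Sigma_s$. Recall that at the base point we have $A=0$, $H=0$, and $h$ hyperbolic, so $\kappa_h=-1$ and $\det(A)=1$. Linearizing $\det(A_s)=\kappa_{h^s}+1$ at $s=0$: since $A=0$, the differential of $\det$ at the zero endomorphism in dimension $2$ is $\dot A\mapsto \Tr(\dot A)$ (because $\det(1+tB)=1+t\,\Tr B+O(t^2)$ applied with the convention that we are really differentiating $\det(A_s)$ where $A_s\to 0$; more precisely, writing $A_s = s\dA + O(s^2)$, we get $\det(A_s) = s^2\det(\dA)+\dots$, so in fact one should linearize the \emph{other} form of the equation, or use the surface $\Sigma_s$ being minimal). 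Let me instead organize around minimality: the minimal surface condition says $H_s=0$ along $\Sigma_s$ for all $s$, hence $\dot H = \Tr_h(\dA) - \langle \dot h, A\rangle_h= \Tr_h(\dA)=0$, using $A=0$. Combined with the gauge choice $\delta^h\doh=0$ from the paragraph preceding the lemma, this already handles $\doh$: it is divergence-free by construction and we must still show it is trace-free.

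To get $\Tr_h(\doh)=0$, I would linearize the Gauss equation in the form \eqref{gauss}. Since $A_s=s\dA+O(s^2)$ and $\det(A_s)=\kappa_{h^s}+1$, the left side is $O(s^2)$, so at first order $\dot\kappa_h = 0$, i.e.\ the conformal/Weil--Petersson variation preserves curvature to first order. But $h^s$ is a path of \emph{hyperbolic} metrics on $\Sigma$ (this is what the totally geodesic boundary of the convex core forces: the metric $h^s$ induced on $\Sigma_s$ must again be hyperbolic because $\Sigma_s$ is minimal and, by the linearized Gauss equation above, $\kappa_{h^s}=-1+O(s^2)$ — actually one argues that since we can genuinely complete to quasi-fuchsian pieces, $h^s$ is exactly hyperbolic). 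For a path of hyperbolic metrics, the linearization $\doh$ of the hyperbolicity constraint $\kappa_{h^s}=-1$ is exactly the equation $(\Delta_h - 1)(\tfrac12\Tr_h\doh) = -\tfrac12\delta^h\delta^h\doh$ (the standard formula for the variation of Gauss curvature), and since $\delta^h\doh=0$ by the gauge this forces $(\Delta_h-1)\Tr_h\doh=0$; as $\Delta_h-1$ is invertible on the closed surface $\Sigma$ (its spectrum is nonpositive), $\Tr_h\doh=0$. This completes the claim for $\doh$.

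For $\dA$: trace-freeness is $\dot H=\Tr_h(\dA)=0$, which is the linearized minimality condition, already noted. Divergence-freeness I would obtain by linearizing the Codazzi--Mainardi equation \eqref{codazzi}, $(d^\nabla)^*A + d\,\Tr(A)=0$. At $s=0$ everything vanishes; the first-order term gives $(d^{\nabla})^*\dA + d(\Tr_h\dA)=0$, where $(d^\nabla)^*$ at the flat connection coincides with the divergence $\delta^h$ acting on the symmetric $2$-tensor $\dA$ (the first-order variation of the Levi-Civita connection contributes a term involving $A=0$, hence drops). Since $\Tr_h\dA=0$ from the previous step, we conclude $\delta^h\dA = 0$, as desired. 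The main obstacle — really the only subtle point — is justifying that $h^s$ is (exactly, or to sufficient order) a path of hyperbolic metrics and that the surface $\Sigma_s$ genuinely sits inside the funnel so that the Gauss--Codazzi framework of Section~\ref{sf} applies verbatim; this rests on the Proposition just proved, completing $\Sigma_j$ to a quasi-fuchsian manifold where $\Sigma_j$ minimal forces its induced metric considerations via the second fundamental form being trace-free there. Once that is in place, the lemma is a short exercise in linearizing the three defining equations and invoking invertibility of $\Delta_h-1$.
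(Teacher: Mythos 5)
Your proof is correct and follows essentially the same route as the paper: linearized minimality gives $\Tr(\dot{A})=0$, the linearized Gauss equation gives $\dot{\kappa}=0$ (since $\det(A^s)=O(s^2)$ when $A=0$ at $s=0$), the intrinsic variation formula \eqref{vark} together with the gauge $\delta^h\dot{h}=0$ and the invertibility of $\Delta_h+1$ gives $\Tr(\dot{h})=0$, and the linearized Codazzi equation at $A=0$ gives $\delta^h\dot{A}=0$. One inessential aside is wrong --- $h^s$ is \emph{not} exactly hyperbolic for $s\neq 0$, since a minimal surface in a hyperbolic $3$-manifold has $\kappa_{h^s}=\det(A^s)-1\leq -1$ with equality only in the totally geodesic case --- but your argument only needs $\kappa_{h^s}=-1+O(s^2)$, which is correct.
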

\begin{proof}
Since $h^s$ is minimal, we have 
$\Tr(A^s)=0$, and hence $\Tr(\dot{A})=0$. 
On one hand, differentiating the Gauss equation implies that the variation of the curvature of $h^s$ is $0$:
\[\frac{\partial}{\partial s} \kappa(h^s)_{|s=0}=\Tr(\dot{A})=0.\]
On the other hand, the variation of $\kappa$ at the hyperbolic metric $h$ 
is given by the following intrinsic formula (cf.\ e.g. \cite[Theorem 1.174(e)]{besse}):
\begin{equation}\label{vark}
2\dot{\kappa}=(\Delta_h+1)\Tr(\dot{h})+d^*\delta^h \dot{h}.
\end{equation}
Since $\delta^h \dot{h}=0$ and $\dot{\kappa}=0$, it follows by positivity of the elliptic operator $\Delta_h+1$ that $\Tr(\dot{h})=0$.
Differentiating the Codazzi equation $\delta^{h^s} A^s=0$ shows, since $A=0$, that $\delta^h \dot{A}=0$. 
\end{proof}

\begin{lemma}\label{gTT}
The tensor $\dot{g}$ on $X$ is trace- and divergence-free in a neighborhood of $\Sigma$ containing the funnel.
\end{lemma}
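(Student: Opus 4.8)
The plan is to show that $\dot g$ solves the linearized Einstein equations in Bianchi gauge near $\Sigma$, and then to exploit the special structure of funnels over $\Sigma$ together with Lemma~\ref{lTT}. The metric $g$ is hyperbolic, so $\Ric(g)=-2g$ (in dimension $3$, $\Scal=-6$), and since each $g^s$ is also hyperbolic, differentiating $\Ric(g^s)=-2g^s$ at $s=0$ gives the linearized Einstein equation $\frac{d}{ds}\Ric(g^s)_{|s=0}=-2\dg$. In dimension $3$ this is equivalent to a linear second-order equation for $\dg$ whose principal part is the Lichnerowicz Laplacian, up to a Lie-derivative term coming from the freedom of reparametrization. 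The point of the Bianchi (de Turck) gauge is precisely to kill that Lie-derivative ambiguity: one shows that, because we chose the diffeomorphisms $\Phi_s$ so that outgoing normal geodesics are mapped isometrically and so that $\delta^h\doh=0$, the Bianchi operator $\beta_g(\dg) = \delta_g\dg + \tfrac12 d\,\tr_g\dg$ vanishes along $\Sigma$ and the gauge propagates. I would either invoke the standard fact that a one-parameter family of Einstein metrics, after a suitable family of diffeomorphisms, can be put into a gauge in which the variation is transverse-traceless, or re-derive it here: the key is that the Bianchi-gauged linearized Einstein operator is a well-posed elliptic operator and the gauge condition is an ODE along the geodesic normal flow with zero initial data on $\Sigma$.

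Concretely, I would work in the funnel coordinates $g=dt^2+h_t$ of \eqref{fun}. Writing $\dg = \dot t\,\partial_t\text{-part} + (\text{mixed } dt\,dx\text{-part}) + \dot h_t$, the choice of $\Phi_s$ — which maps outgoing geodesics isometrically — forces the $dt^2$-component and the mixed $dt\,dx$-components of $\dg$ to vanish identically on the whole funnel; so $\dg$ is purely ``tangential'', $\dg = \dot h_t$, a path of symmetric $2$-tensors on $\Sigma$. Differentiating the explicit formula $h_t = h((\cosh t + A\sinh t)^2\cdot,\cdot)$ in $s$ expresses $\dot h_t$ in terms of $\doh$ and $\dA$ alone. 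Then $\tr_{g}\dg = \tr_{h_t}\dot h_t$ and $\delta_g \dg$ can both be computed from this explicit dependence. Using $A=0$ along $\Sigma$ (totally geodesic boundary), so that $h_t = h\cosh^2 t + \ldots$ degenerates to a very simple $t$-dependence, and using Lemma~\ref{lTT} that $\doh$ and $\dA$ are trace- and divergence-free with respect to $h$, one checks by direct computation that $\tr_{h_t}\dot h_t=0$ and $\delta_{h_t}\dot h_t=0$ for every $t$ in the funnel: the trace and divergence of $\dot h_t$ are (up to $t$-dependent scalar functions) $\tr_h\doh$, $\tr_h\dA$, $\delta^h\doh$, $\delta^h\dA$, all of which vanish. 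This gives tracelessness and tangential divergence-freeness of $\dg$; the remaining normal component $(\delta_g\dg)(\partial_t)$ is handled by the Codazzi-type relation, or simply follows because the full Bianchi operator vanishes and we have already shown the trace vanishes, so $\delta_g\dg=0$.

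So the structure of the argument is: (i) reduce to the funnel and use the gauge to kill normal components of $\dg$; (ii) differentiate the explicit funnel formula to get $\dot h_t$ in terms of $\doh,\dA$; (iii) apply Lemma~\ref{lTT} to conclude that the $t$-dependent trace and divergence vanish for all $t$. The main obstacle I anticipate is step (i): justifying cleanly that the chosen family $\Phi_s$ really does produce a $\dg$ with no $dt^2$ and no $dt\,dx$ parts throughout the funnel, i.e.\ that ``mapping outgoing geodesics isometrically to normal geodesics'' is exactly the condition that in the pulled-back picture $\partial_t$ stays unit and orthogonal to the slices to first order. This is essentially the statement that Fermi (geodesic normal) coordinates based on $\Sigma_s$, transported back via $\Phi_s$, have first variation lying entirely in the tangential block — a Gauss-lemma computation, but one that must be done carefully since it is what makes the Bianchi gauge automatic here rather than something one has to solve for. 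Once that is in hand, the trace- and divergence-freeness of $\dg$ on the whole funnel is a routine consequence of Lemma~\ref{lTT} and the explicit form \eqref{fun}.
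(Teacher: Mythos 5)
Your core argument --- differentiate the explicit funnel normal form \eqref{fun} at $A=0$ to express $\dg$ in terms of $\doh$ and $\dA$, then invoke Lemma \ref{lTT} to kill the trace and the warped-product divergence --- is exactly the paper's proof, and it is correct. Two small remarks: the Bianchi-gauge/gauge-propagation preamble is unnecessary here (and deducing the normal component of $\delta_g\dg$ from ``the full Bianchi operator vanishes'' would be circular, since that vanishing is what the lemma asserts; the gauge vector field $V$ only enters later, for $q=\dg+L_Vg$); and the obstacle you flag in step (i) is immediate from the setup, because pulling back by $\Phi_s$ puts every $g^s$ literally in the form $dt^2+h^s_t$, so $\dg$ has no $dt$ components by construction, and the normal component of the divergence reduces to $ff'\Tr(\dg)$, which vanishes by the tracelessness you have already established.
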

\begin{proof}
We have 
\[\dg=\cosh^2(t)\doh+2\sinh(t)\cosh(t)h(\dA\cdot,\cdot).
\]
This tensor is clearly trace-free, since $\dA,\doh$ are trace-free. 

Let $T:=\partial_t$ (we denote by $\nu$ the restriction of $T$ along $\Sigma$). Write $f=\cosh(t)$ so that
\begin{align*}
g=dt^2+f^2h,&& \dg=f^2\doh+2ff'h(\dA\cdot,\cdot).
\end{align*}
For every tangential vector fields 
$U,V$ independent of $t$, i.e., such that $[T,U]=[T,V]=0$, we have directly from the Koszul formula
\begin{align*}
\nabla_TT=0,&&\nabla_TU=\tfrac{f'}{f}U=\nabla_UT,&&\nabla_UV=\nabla^\Sigma_UV-ff'h(U,V)T.
\end{align*}
For a $1$-forms $\alpha$ with $L_T\alpha=0$ and $\alpha(T)=0$, we get by duality
\begin{align*}
\nabla_Tdt=0,&&\nabla_T\alpha=-\tfrac{f'}{f}\alpha,&&\nabla_U\alpha=\nabla^\Sigma_U\alpha-ff'\alpha(U)dt.
\end{align*}

Since $\dg(T,\cdot)=0$ it follows from the above table that $(\nabla_T\dg)(T,\cdot)=0$. Moreover, if $\{e_1,e_2\}$
is an orthonormal frame for $h$, then 
\begin{align*}
-\sum_{i=1}^2 (\nabla_{e_i}\dg)(e_i,e_j)=-\sum_{i=1}^2 (\nabla^\Sigma_{e_i}\dg)(e_i,e_j),&&-\sum_{i=1}^2 (\nabla_{e_i}\dg)(e_i,T)=ff'\Tr(\dg).
\end{align*}
Both these terms vanish by Lemma \ref{lTT}.
\end{proof}

\section{Variation of the Einstein equation}

In dimension $3$, a hyperbolic metric means an Einstein metric with constant $-2$:
\begin{equation}\label{eieq}
 \Ric=-2g.
\end{equation}
The first-order variation of this equation along a path of metrics reads
(\cite[Theorem 1.174.d]{besse}):
\begin{equation}\label{lee}
-\delta^*(\delta+\tfrac{1}{2} d\Tr)\dot{g} +\tfrac12\left[\nabla^*\nabla \dot{g}
+\Ric\circ \dot{g}+\dot{g}\circ\Ric-2\circR \dot{g}\right]=-2\dot{g}\end{equation}
where the action of the curvature tensor $R$ on a symmetric $2$-tensor $h$ is defined as 
\[(\circR h)_{iq}=\sum_{j,k=1}^3 h_{jk}\langle R_{ij}v_k,v_q\rangle.
\]
Using \eqref{eieq}, equation \eqref{lee} is equivalent to
\[-\delta^*(2\delta+d\Tr)\dot{g} +\nabla^*\nabla \dot{g}-2\circR \dot{g}=0.
\]
A simple computation shows that for $g$ hyperbolic, 
\begin{equation}\label{rcirc}
\circR h =h - \Tr(h)g
\end{equation}
for every symmetric $2$-tensor $h$.

\subsection{Weitzenb\"ock formula for symmetric tensors}\label{sectw}

The following Weitzenb\"ock formulae hold for the rough Laplacian $\nabla^*\nabla$, the
twisted Hodge Laplacian $d^\nabla {d^\nabla}^*+{d^\nabla}^*d^\nabla$ and the Laplacian on functions
over a hyperbolic $3$-manifold: if $q_0$ is a traceless symmetric $2$-tensor and $a$ 
is a smooth function, then
\begin{align*}
&\nabla^*\nabla q_0= (d^\nabla {d^\nabla}^*+{d^\nabla}^*d^\nabla+3)q_0,\\
&\nabla^*\nabla(ag)=\Delta(a) g.
\end{align*}
Moreover, by \eqref{rcirc},
\begin{align*}
\circR (ag)={}&-2ag,& \circR q_0 = q_0.
\end{align*}

\subsection{The Laplace equation on $1$-forms}

Let $\Delta=\nabla^*\nabla$ be the rough Laplacian acting on $1$-forms 
(equivalently, on vector fields) on the compact manifold 
with boundary $K$. Clearly, $\Delta$ maps $C^\infty(K,TK)$ to itself.
Recall that $\nu$ is the unit outgoing vector field orthogonal 
to the boundary $\Sigma$ of $K$, and $L_\nu$ denotes the Lie derivative.

\begin{prop}\label{bvp}
The restriction
\[\Delta+2:\{V\in C^\infty(K,TK); V_{|\Sigma}\in T\Sigma,
L_\nu V \perp \Sigma\}\to C^\infty(K,TK)\]
is an isomorphism.
\end{prop}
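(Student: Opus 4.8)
The plan is to establish that the operator $\Delta+2$ is an isomorphism on the stated space by proving that it is both injective and surjective, using standard elliptic boundary value theory together with a Bochner/integration-by-parts argument. First I would check that the boundary conditions prescribed---namely $V_{|\Sigma}\in T\Sigma$ (i.e.\ the normal component of $V$ vanishes on $\Sigma$) and $L_\nu V\perp\Sigma$ (i.e.\ the tangential part of $L_\nu V$ vanishes)---form an elliptic boundary condition for the second-order operator $\Delta+2=\nabla^*\nabla+2$ in the sense of Lopatinski--Shapiro. This is a mixed Dirichlet/Neumann-type condition: the normal component of $V$ satisfies a Dirichlet condition, while the tangential components satisfy a Neumann-type condition, so coerciveness should follow component-by-component after a suitable splitting $TK_{|\Sigma}=\bR\nu\oplus T\Sigma$. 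Granting ellipticity, the operator is Fredholm of index $0$ between the appropriate Sobolev completions, and by elliptic regularity the kernel and cokernel consist of smooth sections, so it suffices to prove injectivity.

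For injectivity, suppose $(\Delta+2)V=0$ with $V$ satisfying the boundary conditions. I would pair the equation with $V$ and integrate over $K$:
\[
0=\int_K \langle (\Delta+2)V,V\rangle\,\dvol = \int_K\left(|\nabla V|^2 + 2|V|^2\right)\dvol - \int_\Sigma \langle \nabla_\nu V, V\rangle\,dh.
\]
The boundary term $\langle\nabla_\nu V,V\rangle$ must be shown to vanish (or have a favorable sign) using the boundary conditions. Writing $V=V^\top + \phi\,\nu$ along $\Sigma$ with $V^\top$ tangential, the condition $V_{|\Sigma}\in T\Sigma$ forces $\phi\equiv 0$ on $\Sigma$; the condition on $L_\nu V$ controls the tangential part of the normal derivative. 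One then checks, using $\nabla_\nu\nu=0$ (geodesic normal direction) and the relation between $\nabla_\nu V$ and $L_\nu V$, that $\langle\nabla_\nu V,V\rangle_{|\Sigma}$ is expressed purely in terms of quantities killed by the boundary conditions --- in particular, since $V$ is tangential on $\Sigma$ and the tangential part of $L_\nu V$ vanishes, the pairing reduces to a term involving the second fundamental form of $\Sigma$, which is zero because $\Sigma$ is totally geodesic. Hence the boundary integral vanishes, giving $\int_K(|\nabla V|^2+2|V|^2)\,\dvol=0$, so $V\equiv 0$.

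The main obstacle I anticipate is the careful bookkeeping at the boundary: verifying that the specific combination $\{V_{|\Sigma}\in T\Sigma,\ L_\nu V\perp\Sigma\}$ is genuinely elliptic (as opposed to a non-coercive mixed condition that would spoil the Fredholm property), and then showing precisely that the boundary term in the integration by parts vanishes rather than contributing an indefinite quantity. Both points hinge on the totally geodesic hypothesis: the Weingarten operator $A=0$ makes the Lie derivative $L_\nu$ and the covariant derivative $\nabla_\nu$ agree on tangential fields up to terms that drop out, and it is exactly this that both decouples the boundary conditions into a clean Dirichlet-plus-Neumann form and forces the boundary integral to be zero. Once injectivity is in hand, surjectivity is automatic from index zero, and smoothness of solutions from elliptic regularity up to the boundary.
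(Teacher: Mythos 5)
Your proposal is correct, and its analytic core---the integration by parts in which the boundary term $\int_\Sigma\langle\nabla_\nu V,V\rangle$ is killed by combining $V_{|\Sigma}\in T\Sigma$, $(L_\nu V)^\top=0$, and the vanishing of the Weingarten operator of the totally geodesic $\Sigma$ (so that $\nabla_\nu$ and $L_\nu$ agree on tangential fields along $\Sigma$)---is exactly the computation the paper relies on. Where you diverge is in the functional-analytic packaging of surjectivity. You go through the general theory of elliptic boundary value problems: verify the Lopatinski--Shapiro condition for the mixed Dirichlet (normal component) / Neumann (tangential components) condition, conclude Fredholmness, and then need index zero, which you assert but do not justify; the clean way to get it is to observe that the same Green's identity shows the boundary value problem is formally self-adjoint, so the cokernel is identified with the kernel. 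The paper instead uses your symmetric integration-by-parts identity for two fields $V,V'$ to conclude that $\nabla^*\nabla$ is symmetric and non-negative on the given domain, passes to its Friedrichs extension $\Delta_\cF$, and gets invertibility of $\Delta_\cF+2$ for free from non-negativity plus the strictly positive shift; elliptic regularity up to the boundary then pulls the statement back to the smooth domain. The Friedrichs route buys you surjectivity without any explicit symbol computation or index argument, at the cost of invoking boundary regularity for the variational problem; your route is more explicit about why the boundary condition is elliptic but leaves the index-zero step as a genuine (if standard) gap to fill. Either way the totally geodesic hypothesis enters only once, in the boundary term, exactly as you identified.
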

\begin{proof}
Let $\cD$ denote the initial domain $\{V\in C^\infty(K,TK); V_{|\Sigma}\in T\Sigma,
L_\nu V \perp \Sigma\}$. Then by integration by parts using that $\Sigma$ is
totally geodesic, we have for all $V,V'\in\cD$:
\[
\langle \nabla^*\nabla V,V'\rangle = \langle \nabla V,\nabla V'\rangle = 
\langle V,\nabla^*\nabla V'\rangle. 
\]
This implies that $\nabla^*\nabla$ is symmetric and non-negative on $\cD$.
Its self-adjoint Friedrichs extension 
\[
\Delta_\cF:\cD_\cF\to L^2
\]
is therefore also non-negative, so
$\nabla^*\nabla+2:\cD_\cF\to L^2$ is invertible. By elliptic regularity,
the preimage of $C^\infty(K,TK)$ by this operator must lie in $\cD_\cF\cap C^\infty(K,TK)= \cD$.
\end{proof}

\section{The Hessian of the volume of compact hyperbolic $3$-manifolds with geodesic boundary}

\begin{theorem}\label{pozcomp}
Let $(K,g)$ be a compact hyperbolic $3$-manifold with totally geodesic boundary, 
and $\{g^s\}_{s\in\bR}$ a smooth family of hyperbolic metrics on $K$ with minimal boundary. 
Then the Hessian of the volume functional of $K$ at $g$ is positive.
\end{theorem}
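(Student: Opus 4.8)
The plan is to express the first and second derivatives of $\Vol(K,g^s)$ in terms of the deformation tensor $\dg$ and then identify the Hessian with a boundary quadratic form that a Weitzenb\"ock argument forces to be positive. The first derivative of volume along a variation with traceless $\dg$ vanishes identically (Schl\"afli-type formula: $\tfrac{d}{ds}\Vol=\tfrac12\int_K \tr_g\dg\,\dvol + \text{boundary terms}$, and here the interior integrand vanishes while the geodesic-boundary condition kills the boundary term), so $g$ is a critical point; this is consistent with Bonahon's minimality and means the Hessian is a well-defined bilinear form. To compute the second derivative I would differentiate the first-variation formula once more. Because $g$ is critical, only the terms where one derivative hits $\dg$ survive, and after integration by parts the Hessian reduces to a boundary integral over $\Sigma$ of the form $\tfrac14\int_\Sigma \langle \dg(\nu,\cdot),(\text{something involving }\partial_s\text{ of the second fundamental form})\rangle$ — concretely, a pairing of the boundary data of $\dg$ with its normal derivative, i.e.\ a Dirichlet-to-Neumann operator applied to the boundary restriction of $\dg$.

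The next step is to solve the linearized Einstein equation with the correct gauge and boundary conditions so that this Dirichlet-to-Neumann operator is well-posed. By Lemma \ref{gTT}, $\dg$ is trace- and divergence-free near $\Sigma$, so by \eqref{lee}, \eqref{rcirc} and the Weitzenb\"ock identities of Section \ref{sectw}, on the traceless divergence-free part one has $\nabla^*\nabla\dg - 2\circR\dg = \nabla^*\nabla\dg - 2\dg = 0$, i.e.\ $\dg$ is an eigentensor: in the chosen Bianchi gauge the linearized Einstein operator becomes simply $\nabla^*\nabla + (\text{constant})$ acting on a TT-tensor. Here is where Proposition \ref{bvp} enters: the relevant boundary-value problem for the vector-field/$1$-form part (coming from the gauge-fixing and from splitting $\dg$ along $\Sigma$ into tangential and normal components) is governed by $\Delta + 2$ on $\cD$, which is an isomorphism, so the deformation $\dg$ is uniquely determined by its boundary restriction $\doh$ on $\Sigma$ (which by Lemma \ref{lTT} is itself trace- and divergence-free). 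Thus the Hessian is genuinely a DtN quadratic form in $\doh$.

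The main obstacle — and the heart of the proof — is showing this DtN form is strictly positive. The strategy is an integration-by-parts/Weitzenb\"ock argument on $K$: write $\int_\Sigma \langle \dg(\nu,\cdot), L_\nu(\cdot)\rangle$ as $\int_K (\langle \nabla\dg,\nabla\dg\rangle + \langle \nabla^*\nabla\dg,\dg\rangle)$-type expression, substitute $\nabla^*\nabla\dg = 2\dg$ from the linearized Einstein equation, and obtain $\int_\Sigma(\text{DtN form}) = \int_K(|\nabla\dg|^2 + 2|\dg|^2)\,\dvol \ge 0$, with equality forcing $\dg\equiv 0$ on $K$ and hence $\doh\equiv 0$ on $\Sigma$. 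The delicate points are: (i) checking that the boundary terms produced by the integration by parts match exactly the Hessian of the volume — this requires carefully tracking the totally-geodesic condition $\II=0$ and the minimality condition on $\Sigma_s$, which ensures the "wrong" boundary terms drop out; and (ii) verifying the sign of the coefficient in front of $|\dg|^2$, which relies on \eqref{rcirc} giving $\circR\dg = \dg$ on traceless tensors (positive contribution) rather than the $-2ag$ behavior on pure-trace tensors. Once both integrands $|\nabla\dg|^2$ and $2|\dg|^2$ are manifestly nonnegative and the vanishing case is ruled out via unique continuation or directly via Proposition \ref{bvp}, positive-definiteness of the Hessian follows.
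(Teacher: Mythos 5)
Your overall architecture matches the paper's: gauge-fix to Bianchi gauge via the boundary-value problem of Proposition \ref{bvp}, identify the Hessian with a Dirichlet-to-Neumann pairing on $\Sigma$, and prove positivity by integration by parts using the linearized Einstein equation. But the decisive step is wrong as written. From $(\nabla^*\nabla-2\circR)q=0$ and $\circR q_0=q_0$ you get $\nabla^*\nabla q_0=2q_0$, and Green's formula for the \emph{rough} Laplacian then yields
\[
\int_\Sigma\langle\nabla_\nu q_0,q_0\rangle
=\int_K\bigl(|\nabla q_0|^2-\langle\nabla^*\nabla q_0,q_0\rangle\bigr)
=\int_K\bigl(|\nabla q_0|^2-2|q_0|^2\bigr),
\]
with a \emph{minus} sign on the zeroth-order term, not the $+2|\dg|^2$ you claim; this expression has no definite sign, so positivity does not follow. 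The whole point of the Weitzenb\"ock formulae of Section \ref{sectw} is to trade $\nabla^*\nabla$ for the twisted Hodge Laplacian, rewriting the equation on the trace-free part as $(d^\nabla{d^\nabla}^*+{d^\nabla}^*d^\nabla+1)q_0=0$; only then does the boundary pairing become $\int_K(|d^\nabla q_0|^2+|{d^\nabla}^*q_0|^2+|q_0|^2)\ge\|q_0\|^2$, which is exactly the inequality of Theorem \ref{poz}. You cite these formulae in passing but do not use them where they are indispensable.

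Two further omissions. First, $q=\dg+L_Vg$ need not be trace-free in the interior of $K$ (Lemma \ref{gTT} only gives tracelessness near $\Sigma$), so the pure-trace part $ag$ must be handled separately via $(\Delta+4)a=0$, where the rough-Laplacian sign happens to be favorable; your claim that the first-variation interior integrand $\tr_g\dg$ vanishes rests on the same unjustified global tracelessness, whereas the paper uses the Rivin--Schlenker Schl\"afli formula, a pure boundary identity that vanishes at $s=0$ because $L_\nu g=0$. Second, differentiating that Schl\"afli formula along $G^s=\phi_s^*g^s$ produces the extra boundary term $\langle q,L_{[V,\nu]}g\rangle$ in \eqref{ddv}, which is not obviously zero; the paper devotes Proposition \ref{lan} (Taylor-expanding $V$ and proving $u_2=0$) to killing it, and your sketch does not account for it.
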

\begin{proof}
For small $s$, the principal curvatures along $K$ are smaller than $1$, so equation
\eqref{fun} defines a funnel,
extending $g^s$ to a complete hyperbolic metric on $X=K\cup F$, unique up to isometry,. By hypothesis, $\Sigma$, the possibly disconnected
boundary of $K$, is minimal for each of the metrics $g^s$. Moreover, the outgoing normal geodesics to 
$\Sigma$ with respect to $g$ are also parametrized geodesics for $g^s$. By composing with 
a family of diffeomorphisms of $X$ preserving $\Sigma$, we can assume that $\dot{h}$, the first-order variation 
of the metrics $g^s$ restricted to $\Sigma$, is divergence-free. It follows that we can apply the results of Section \ref{cccm}, in particular $\dot{h}$ and $A$ are divergence-free, trace-free along $\Sigma$, while $\dot{g}$ is divergence-free, trace free on the funnel.

Consider the following boundary-value problem:
\begin{align}\label{Dp}
\begin{cases}
(\nabla^*\nabla+2) V=-(\delta +\tfrac{1}{2}d\Tr)\dot{g},\\
V\in C^\infty(K,TK),\\
V_{|\Sigma}\in T\Sigma,\\
L_\nu V \perp \Sigma.
\end{cases}
\end{align}
By Proposition \ref{bvp}, there exists a unique solution $V$ to \eqref{Dp}.
Set 
\[q:=\dot{g}+L_Vg.\]
If $\{\phi_s\}$ is the $1$-parameter group of diffeomorphisms of $K$ integrating $V$ 
(well-defined since $V$ is tangent to $\partial K$), then $q$ is the tangent vector field 
to the $1$-parameter family of metrics $G^s:=\phi_s^*g^s$. 

Remark that 
\begin{itemize}
 \item $G^s$ is hyperbolic;
\item  $\Vol(K,g^s)=\Vol(K,G^s)$;
\item  $\Sigma$ is minimal in $(K,G^s)$ for every $s$;
\item $\Sigma$ is totally geodesic at $s=0$;
\item $\nu^s:={\phi^s}^*\nu$ is the unit normal vector field to $\Sigma$ with respect to $G^s$.
\end{itemize}
The last property holds because $\nu$ is the unit normal vector field to $\Sigma$ with respect to $g^s$ for every $s$.

By the Schl\"afli formula of Rivin-Schlenker (see \cite{GMS}, Lemma 5.1), we have
\[\partial_s \Vol(K,g^s)=\tfrac{1}{2}\int_\Sigma (\Tr(\dA^s) + \tfrac{1}{2}\Tr((h^s)^{-1} \doh^s A^s))d\vol_{h^s}
=\tfrac{1}{8}\langle \doh^s, L_\nu g^s\rangle_{L^2(\Sigma, h^s)}.\]

The same formula for the family of metrics $G^s=\phi_s^*g^s$ gives
\begin{equation}
\partial_s \Vol(K,\phi_s^*g^s)=\tfrac{1}{8}\langle \partial_s G^s, L_{{\phi^s}^*{\nu}} G^s\rangle_{L^2(\Sigma,G^s)}.
\end{equation}
The term $L_{{\phi^s}^*{\nu}} G^s$, i.e., the second fundamental form of $\Sigma$ with respect to $G^s$, 
vanishes at $s=0$. One more derivative at $s=0$ shows therefore
\begin{equation}\label{ddv}
\partial_s^2 \Vol(K,G^s)_{s=0}=\tfrac{1}{8}\langle q, L_\nu q-L_{[V,\nu]}g \rangle_g.
\end{equation}

Since $\Vol(K,g^s)=\Vol(K,G^s)$, the above formula computes the second variation 
$\ddot\Vol$ of $\Vol(K,g^s)$.

\begin{theorem}\label{poz}
The inner product $\langle q, L_\nu q\rangle_{L^2(\Sigma,g)}$ is non-negative. Explicitly,
\[
 \langle q, L_\nu q\rangle_{L^2(\Sigma,g)}\geq \|q\|^2.
\]
\end{theorem}
\begin{proof}
The tensor $q$ is a solution to the linearized Einstein equation because the family $G^s$ 
consists of hyperbolic metrics. Use now the following identities on vector fields:
\begin{align*}(2\delta+d\Tr )\delta^*=\nabla^*\nabla+2,&&2\delta^*V = L_Vg.
\end{align*}
From \eqref{Dp}, it follows that $q=\dot{g}+L_Vg$ is in Bianchi gauge, i.e.,
\[(\delta+\tfrac{1}{2}d\Tr)q=0.\]
Equation \eqref{lee} implies that $q$ is a solution of the elliptic equation
\begin{equation}\label{elb}
(\nabla^*\nabla-2\circR)q=0.\end{equation} 
Decompose $q$ in its trace-free component
$q_0$ and its pure trace component $ag$ for some $a\in C^\infty(K)$.
Using the Weitzenb\"ock formulae from section \ref{sectw},
\eqref{elb} is equivalent to
\begin{align}\label{dirp}
\begin{cases}
(d^\nabla {d^\nabla}^*+{d^\nabla}^*d^\nabla+1)q_0=0,\\(\Delta+4) a=0.
\end{cases}
\end{align}
Because of these identities, integration by parts on $K$ gives 
\begin{align}
\int_\Sigma \langle q_0,L_\nu q_0\rangle d\vol_h
={}&\int_K(|d^\nabla q_0|^2+|{d^\nabla}^*q_0|^2+|q_0|^2)d\vol_g,\label{g1}\\
\int_\Sigma \langle a,L_\nu a\rangle d\vol_h={}&\int_K(|da|^2+4a^2)d\vol_g.\label{g2}
\end{align}
Since $L_\nu g=0$ (equivalent to $(\Sigma,h)\hookrightarrow (K,g)$ being totally geodesic), 
\eqref{g2} is the same as
\begin{align}
\int_\Sigma \langle ag,L_\nu (ag)\rangle d\vol_h={}&\int_K(|da|^2+4a^2)d\vol_g.\label{g3}
\end{align}
Since $\Tr(g^{-1}q_0)=0$ by definition and $L_\nu g=0$, it follows by applying $L_\nu$ 
that 
\[\Tr(g^{-1}L_\nu q_0)=0.\] 
Hence $L_\nu q_0$ is trace-free, $L_\nu(ag)$ is a multiple of $g$, and so
\eqref{g1} and \eqref{g3} give
\begin{align*}
\int_\Sigma \langle q,L_\nu q\rangle d\vol_h\geq \|q_0\|^2+4\|a\|^2\geq \|q\|^2.
\end{align*}
\qedhere
\end{proof}

Returning to \eqref{ddv}, we would like to analyze the remaining term. In fact we prove below that
it vanishes pointwise on $\Sigma$, thereby ending the proof of Theorem \ref{pozcomp}.
\end{proof}
\begin{prop}\label{lan}
The scalar product $\langle q,L_{[V,T]}g \rangle_g$ is pointwise zero on $\Sigma$.
\end{prop}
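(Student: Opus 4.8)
The plan is to show that $L_{[V,T]}g$ vanishes along $\Sigma$, which trivially implies the pointwise vanishing of the scalar product. The vector field $[V,T]$ restricted to $\Sigma$ is what we need to understand. First I would record the boundary conditions satisfied by $V$: by construction $V_{|\Sigma}\in T\Sigma$ and $L_\nu V\perp\Sigma$, i.e.\ $L_TV=[T,V]$ has no tangential component along $\Sigma$. Thus $W:=[V,T]=-[T,V]$ is, along $\Sigma$, a multiple of $\nu=T_{|\Sigma}$, say $W_{|\Sigma}=\varphi\,\nu$ for some function $\varphi$ on $\Sigma$. Next I would compute $L_Wg$ and evaluate it on pairs of vectors, using the explicit Christoffel symbols from the proof of Lemma \ref{gTT} (with $f=\cosh t$, so $f'=\sinh t$ vanishes at $t=0$). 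Since $\Sigma$ is totally geodesic at $s=0$, we have $\nabla_U\nu = 0$ for $U\in T\Sigma$ (this is exactly $f'=0$ at $t=0$), which will kill the terms where a tangential derivative of $W$'s normal part produces a second fundamental form contribution.

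The key computation is $(L_Wg)(X,Y)=\langle\nabla_XW,Y\rangle+\langle\nabla_YW,X\rangle$ for $X,Y\in\{T\}\cup T\Sigma$. For $X=Y=T$: since $[T,W]$ pairs with $T$ trivially and $\nabla_TT=0$, one gets $(L_Wg)(T,T)=2\langle\nabla_TW,T\rangle = 2\langle\nabla_T(\varphi\nu),T\rangle$-type term, but I must be careful that $W$ is only known on $\Sigma$, not its $T$-derivative; however $(L_Wg)(T,T)=2T\langle W,T\rangle - 2\langle W,\nabla_TT\rangle$ and the relevant quantity is $\langle W,T\rangle=\varphi$ on $\Sigma$ — here I would instead argue via $T\langle W,T\rangle$ needs the normal derivative of $\varphi$, so a cleaner route is needed. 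The cleaner route: $L_Wg$ along $\Sigma$ as a bilinear form on $T\Sigma\oplus\bR\nu$ only requires knowing $W$ on $\Sigma$ together with $\nabla_UW$ for $U\in T\Sigma$ and the single quantity $(L_Wg)(\nu,\cdot)$. For $X,Y\in T\Sigma$: $(L_Wg)(X,Y)=\langle\nabla_XW,Y\rangle+\langle\nabla_YW,X\rangle = 2\varphi\langle\nabla_X\nu,Y\rangle + (\text{terms from }X\varphi) = 2\varphi\,\II(X,Y) + 0$, and $\II=0$ since $\Sigma$ is totally geodesic, so this block vanishes. The mixed block $(L_Wg)(X,\nu)$ and the $(\nu,\nu)$ entry: these involve $\nabla_\nu W$, which a priori needs the transverse jet of $V$.

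The main obstacle, therefore, is controlling $\langle q, W^\flat\rangle$-type pairings in the directions involving $\nu$, where $L_Wg$ might not vanish outright. Here I would use that $q$ is a \emph{variation of metrics with minimal boundary}: more precisely, I would not claim $L_Wg=0$ identically, but rather exploit that in the integrand of \eqref{ddv} the term $L_{[V,\nu]}g$ is paired against $q$ with respect to the $L^2(\Sigma,g)$ inner product on symmetric $2$-tensors, so only the tangential-tangential block of $L_Wg$ (contracted with the tangential-tangential block of $q$) plus cross terms matter — and I would show that $q$ restricted to $\Sigma$ has a special structure (from $G^s$ having $\Sigma$ minimal and totally geodesic at $s=0$, plus $\nu^s={\phi^s}^*\nu$ being the unit normal) forcing the surviving pairing to vanish. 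Concretely, the trace-free part $q_0$ and pure-trace part $ag$ decomposition, combined with $L_Wg$ being (on the tangential block) $2\varphi\,\II=0$, should reduce everything to the normal components, where the constraint $\nu^s={\phi^s}^*\nu$ unit normal gives $\langle q, \text{(normal part)}\rangle$ relations that close the argument. I expect the honest difficulty to be bookkeeping the precise boundary 2-jet of $V$ forced by $L_\nu V\perp\Sigma$ together with the equation $(\nabla^*\nabla+2)V = -(\delta+\tfrac12 d\Tr)\dot g$, and verifying that the right-hand side's restriction to $\Sigma$ is compatible with $W_{|\Sigma}$ being purely normal with the specific $\varphi$ that makes the cross terms cancel against $q$'s structure.
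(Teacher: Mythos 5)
Your outline follows the same route as the paper: you correctly observe that $W=[V,T]$ is purely normal along $\Sigma$ (because the boundary condition $L_\nu V\perp\Sigma$ kills the tangential part of $[T,V]$ there), that the tangential--tangential block of $L_Wg$ equals $2\varphi\,\II$ and hence vanishes by total geodesy, and that the remaining mixed and $(\nu,\nu)$ entries require the transverse $2$-jet of $V$. But precisely at that point the proposal stops being a proof: you defer the decisive computation (``I expect the honest difficulty to be bookkeeping\dots''), and the cancellation mechanism you gesture at for the normal components --- relations coming from $\nu^s={\phi^s}^*\nu$ and the $q_0$ versus $ag$ decomposition --- is not substantiated and is not how the cancellation actually works.

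Two concrete facts are missing. First, the mixed block. Writing $V=v_0+tu_1T+t^2(v_2+u_2T)+O(t^3)$, the boundary conditions give $u_0=0$ and $v_1=0$, whence $L_Vg=L_{v_0}h+u_1\,dt\otimes dt+O(t)$ at $\Sigma$; since $\dot g$ is purely tangential on the funnel, $q=\dot g+L_Vg$ has \emph{no} $dt\otimes\Lambda^1\Sigma$ components at $t=0$. So the mixed entries of $L_Wg$, which you rightly note you cannot control from the data on $\Sigma$ alone, simply pair to zero against $q$ --- no special structure of $q$ beyond this is needed. Second, and this is the real content of the proposition, the $(\nu,\nu)$ entry: $q(\nu,\nu)=u_1$ need \emph{not} vanish, while $(L_Wg)(\nu,\nu)$ is a nonzero multiple of $u_2$, so one must prove $u_2=0$. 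The paper does this by observing that the source term $-(\delta+\tfrac12 d\Tr)\dot g$ vanishes near $\Sigma$ (Lemma \ref{gTT}), so $V$ solves the homogeneous equation $(\Delta+2)V=0$ there; Bochner's formula converts this to $(\Delta_H+4)V=0$, and evaluating the Hodge Laplacian at $t=0$,
\begin{equation*}
(\Delta_H V)_{|t=0}=\Delta^h v_0-2v_2-2u_2\nu,
\end{equation*}
together with $V_{|\Sigma}\in T\Sigma$, forces $u_2=0$. Without this step (or an equivalent argument) the surviving pairing $u_1\cdot(L_Wg)(\nu,\nu)$ is not shown to vanish, and the proposition is not established.
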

\begin{proof}
Let 
\[V=v_0+tu_1 T+t^2 (v_2+u_2T)+O(t^3)\] 
be the Taylor expansion of $V$ near $t=0$, using the fixed product decomposition near $\Sigma$. Here $v_0,v_2$ are vector fields on $\Sigma$, while $u_1,u_2$ are functions. Note that the coefficients $u_0$ and $v_1$ vanish (and we omit them from the formula) as a consequence of \eqref{Dp}.

From the definition, $q=\dg+L_Vg$ where we can also write $L_Vg=2(\nabla V)_\sym=2\delta^*V$. We recall that $\dg$ is tangential (it does not contain terms involving $dt$). The correction term equals at $\Sigma$
\begin{align}\label{lvg}
L_Vg=L_{v_0}h+u_1 dt\otimes dt +O(t)
\end{align} 
and so in particular it has no mixed terms of the type $dt\otimes \Lambda^1\Sigma$.
The vector field $[T,V]$ equals 
\[[T,V]=u_1T+2tv_2+2tu_2T +O(t^2).
\]
The tensor $L_{[T,V]}g$ does not have any tangential component at $t=0$. 
The mixed terms do not contribute in the scalar product with $q$ since that last tensor 
has no such mixed terms. The coefficient of $dt\otimes dt$ in $L_{[T,V]}g$ is $2u_2$.
But this term vanishes by the lemma below.
\end{proof}

\begin{lemma}
The second-order normal term $u_2$ in $V$ vanishes.
\end{lemma}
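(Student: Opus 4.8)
The plan is to extract the equation satisfied by the normal component of $V$ near $\Sigma$ from the boundary-value problem \eqref{Dp}, and then exploit the boundary conditions $V_{|\Sigma}\in T\Sigma$ and $L_\nu V\perp\Sigma$ to pin down the low-order Taylor coefficients. Concretely, I would write $V=v_0+tu_1T+t^2(v_2+u_2T)+O(t^3)$ as in Proposition \ref{lan} and compute $(\nabla^*\nabla+2)V$ to low order in $t$ using the Koszul-formula table established in the proof of Lemma \ref{gTT} (which gives $\nabla_TT=0$, $\nabla_TU=\tfrac{f'}{f}U$, etc., with $f=\cosh t$). The point is that the right-hand side $-(\delta+\tfrac12 d\Tr)\dg$ is a \emph{tangential} object: since $\dg$ has no $dt$-components and its tangential part is $\cosh^2(t)\doh+2\sinh(t)\cosh(t)h(\dA\cdot,\cdot)$, the $T$-component of $(\delta+\tfrac12 d\Tr)\dg$ is easily seen to vanish to first order at $t=0$ (indeed $\dg$ vanishes to zeroth order in a way that kills the normal divergence, and $\Tr\dg=0$). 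Thus the $dt\otimes dt$ component of the equation $(\nabla^*\nabla+2)V=\text{RHS}$, read off at order $t^0$, becomes a linear relation forcing $u_2$ in terms of $u_1$ and $v_0$.

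The key computation is the scalar (normal) component of $\nabla^*\nabla V$ at $\Sigma$. Writing $V=wT+W$ with $w$ a function and $W$ tangential, one finds from the table that the $T$-component of $\nabla^*\nabla V$ at $t=0$ involves $-\partial_t^2 w$, first-order terms, and a curvature/second-fundamental-form contribution that is controlled because $\Sigma$ is totally geodesic; in the Taylor expansion $-\partial_t^2 w|_{t=0}=-2u_2$. Collecting the order-$t^0$ terms of the normal component of the equation, and using that the normal component of the right-hand side vanishes at $\Sigma$ together with the already-established facts $u_0=0$, $v_1=0$, I expect the relation to collapse to $u_2=0$ outright — the zeroth-order normal data $w|_\Sigma=0$ (from $V_{|\Sigma}\in T\Sigma$) and the first-order condition $L_\nu V\perp\Sigma$, which gives $\partial_t w|_{t=0}=0$ hence $u_1=0$ as well, feed into the ODE for $w$ along the normal geodesic and force the next coefficient to vanish.

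The main obstacle I anticipate is bookkeeping: correctly assembling the normal component of $\nabla^*\nabla V$ in the warped-product metric $g=dt^2+f^2h$ to the needed order, keeping track of the terms coming from $\nabla_U\nabla_U V$ where $U$ is tangential (these produce the $-ff'h(U,V)T$-type corrections from the table) and distinguishing genuine contributions from terms that are annihilated either by trace-freeness of $\doh,\dA$ or by the total-geodesy of $\Sigma$. A secondary subtlety is verifying cleanly that the normal component of $(\delta+\tfrac12 d\Tr)\dg$ indeed vanishes to the order required; this should follow from $\dg(T,\cdot)=0$ together with $(\nabla_T\dg)(T,\cdot)=0$ (both noted in the proof of Lemma \ref{gTT}) and $\Tr\dg=0$, but it must be checked at the right order in $t$. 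Once these are in hand, the conclusion $u_2=0$ is immediate, and then by Proposition \ref{lan} the term $\langle q,L_{[V,T]}g\rangle_g$ vanishes pointwise on $\Sigma$, so \eqref{ddv} reduces to $\partial_s^2\Vol(K,G^s)_{s=0}=\tfrac18\langle q,L_\nu q\rangle_g\geq\tfrac18\|q\|^2$, completing the proof of Theorem \ref{pozcomp}.
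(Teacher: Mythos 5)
You have the right overall strategy, and it is essentially the paper's: the source term $-(\delta+\tfrac12 d\Tr)\dg$ in \eqref{Dp} vanishes near $\Sigma$ by Lemma \ref{gTT}, so $(\nabla^*\nabla+2)V=0$ there, and one reads off the normal component of this equation at $t=0$. The paper organizes the computation via Bochner's formula $\Delta_H=\nabla^*\nabla+\Ric$, writing $(\Delta_H V)_{|t=0}=\Delta^h v_0-2v_2-2u_2\nu$ and extracting $u_2=0$ from the normal part together with $V_{|\Sigma}\in T\Sigma$; your plan to do the warped-product computation directly from the Koszul table is a legitimate alternative route to the same identity.

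There is, however, a genuine error in your treatment of the boundary conditions. The condition $L_\nu V\perp\Sigma$ says that $L_\nu V$ is \emph{perpendicular} to $\Sigma$, i.e.\ purely normal; it therefore kills the tangential first-order coefficient $v_1$, not the normal one. It does \emph{not} give $\partial_t w|_{t=0}=0$, and $u_1$ does not vanish in general --- indeed the paper carries $u_1$ through the proof of Proposition \ref{lan}, where it appears as the $dt\otimes dt$ coefficient of $L_Vg$ and as the leading term of $[T,V]$. Consequently your mechanism ``zero initial data $w(0)=w'(0)=0$ feed into the ODE and force the next coefficient to vanish'' is not available as stated. What actually saves the conclusion is that $u_1$ drops out of the order-$t^0$ normal component of $\nabla^*\nabla V$ regardless: writing $V=wT+W$ with $f=\cosh t$, every contribution to that component involving $\partial_t w$ or $w$ at first order enters multiplied by $f'(0)=0$ (total geodesy of $\Sigma$), and all tangential derivatives of $w$ vanish at $t=0$ because $w|_{t=0}\equiv 0$; hence the normal component of $(\nabla^*\nabla+2)V$ at $t=0$ reduces to $-\partial_t^2 w|_{t=0}+2w|_{t=0}=-2u_2$, giving $u_2=0$. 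You need to exhibit this cancellation explicitly; the step deriving $u_1=0$ is false, and the subsequent reasoning as written depends on it.
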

\begin{proof}
The free term $-2\delta \dot{g}-d\Tr(\dot{g})$ in the boundary-value problem \eqref{Dp} determining $V$ vanishes near $\Sigma$ by Lemma \ref{gTT}.
At $t=0$ the Hodge Laplacian $\Delta_H=dd^*+d^*d$
on $1$-forms takes the form
\[(\Delta_H V)_{|t=0}=\Delta^h v_0-2v_2-2u_2\nu. 
\]
Since $g$ is hyperbolic, Bochner's formula 
\[\Delta_H=\nabla^*\nabla+\Ric
\]
gives
$(\Delta_H+4)V=(\Delta+2)V=0$, and using that $V$ is tangent to $\Sigma$ we deduce $u_2=0$. 
\end{proof}

\section{The Hessian of the renormalized volume on the funnel}

We have seen above that $\vol(K)$ has positive Hessian at $g$. 
But since $\Sigma$ is minimal for $g^s$, we remark that 
\[\Volr(X,g^s;h_\infty^s)=\Vol(K,g^s).\] 
To prove Theorem \ref{main} 
we must therefore analyze the Hessian of $\Vol_R-\Vol(K)$.
For this, let $\omega^s\in C^\infty(\Sigma)$ be the conformal factor so that $e^{2\omega^s}h^s_\infty$ has constant curvature $-4$. 
Such a conformal factor is unique, smooth in $s$, and $\omega^0=0$. 

Following the proof of \cite[Theorem 9]{CM} one could obtain by similar methods:
\begin{prop}\label{pf}
Let $\omega^s$ be the conformal factor uniformizing the metrics $h_\infty^s$. Then for small $s$,
\[\Volr(X,g^s)\geq \Volr(X,g^s;h_\infty^s),\]
with equality at $s=0$
\end{prop}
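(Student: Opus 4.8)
The plan is to compare $\Volr(X,g^s)=\Volr(X,g^s;e^{2\omega^s}h_\infty^s)$ with $\Volr(X,g^s;h_\infty^s)$ via the conformal change formula for the renormalized volume. Recall from \cite{KS08} (see also \cite{GMS}, \cite{CM}) that if $h'=e^{2\varphi}h_\infty$ is another metric at infinity in the same conformal class, then there is an explicit Polyakov-type formula
\[
\Volr(X,g;h')-\Volr(X,g;h_\infty)=-\tfrac14\int_\Sigma\left(|d\varphi|_{h_\infty}^2+2\varphi(\kappa_{h_\infty}+1)\right)d\vol_{h_\infty}
\]
up to normalization conventions; here the relevant point is that this difference, viewed as a function of the conformal factor $\varphi$ with $h_\infty$ fixed, is \emph{concave} and attains its maximum exactly at the uniformizing factor (the critical equation $\Delta_{h_\infty}\varphi=\kappa_{h_\infty}+1$, equivalently $e^{2\varphi}h_\infty$ has curvature $-4$, being precisely the Euler--Lagrange equation). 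Thus for each fixed $s$ the value $\Volr(X,g^s;e^{2\omega^s}h_\infty^s)$ is the maximum over the conformal class, so $\Volr(X,g^s)\geq\Volr(X,g^s;h_\infty^s)$, with equality precisely when $h_\infty^s$ is itself uniformized, i.e.\ of curvature $-4$. At $s=0$ the metric at infinity is $h_\infty^0=h/4$, which has curvature $-4$ by Section \ref{cccm}, so $\omega^0=0$ and equality holds. This is exactly the statement.

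More carefully, I would first record the conformal variation formula for $\Volr(X,g;\cdot)$ in the normalization used in this paper (curvature $-4$), following \cite[Theorem 9]{CM}: with $h'=e^{2\varphi}h_\infty$ one has a functional $\varphi\mapsto\Volr(X,g;e^{2\varphi}h_\infty)$ whose first variation is $-\tfrac12\int_\Sigma\dot\varphi\,(\Delta_{h_\infty}\varphi-\kappa_{h_\infty}-1)\,d\vol_{h_\infty}$ (up to a positive constant), and whose second variation is $-\tfrac12\int_\Sigma|d\dot\varphi|^2_{h_\infty}\,d\vol_{h_\infty}\le 0$, hence the functional is concave. The uniformizing factor $\omega$ solves the critical equation, so it is the global maximizer; this gives the inequality for every $s$. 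The equality case at $s=0$ follows because $h_\infty^0$ already has constant curvature $-4$, forcing $\omega^0=0$ so that $e^{2\omega^0}h_\infty^0=h_\infty^0$ and the two renormalized volumes coincide.

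The main obstacle is bookkeeping rather than conceptual: one must pin down the precise conformal-change formula in the curvature-$(-4)$ normalization (the paper's $h_\infty$ has curvature $-4$, not $-1$), keeping track of the factors of $\tfrac14$ and the shift in the linear term, and verify that the functional is genuinely concave in $\varphi$ with the uniformizing factor as its unique critical point. This is exactly the computation carried out in \cite[Theorem 9]{CM}, so I would cite that and adapt the normalization; the phrase ``by similar methods'' in the statement of Proposition \ref{pf} refers to this adaptation. A secondary point worth checking is that $\omega^s$ depends smoothly on $s$ with $\omega^0=0$ — this is standard, since the uniformization equation $\Delta_{h_\infty^s}\omega^s=\kappa_{h_\infty^s}+1$ is an elliptic PDE depending smoothly on $s$ whose linearization $\Delta_{h_\infty^0}+2$ (at a curvature $-4$ background) is invertible, so the implicit function theorem applies — but this is already asserted in the text preceding the proposition.
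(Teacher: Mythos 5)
Your central mechanism is incorrect: the functional $\varphi\mapsto\Volr(X,g;e^{2\varphi}h_\infty)$ is \emph{not} maximized at the uniformizing factor over the full conformal class, and the uniformizing factor is not even a critical point of it. The Polyakov-type formula \eqref{chvr} is quadratic in $\varphi$ with no zeroth-order term $\int e^{2\varphi}$; its Euler--Lagrange equation is $\Delta_{h_\infty}\varphi+\kappa_{h_\infty}=0$, which prescribes \emph{zero} curvature for $e^{2\varphi}h_\infty$ and has no solution on a surface of genus $\geq 2$ by Gauss--Bonnet, while your modified linear term $2\varphi(\kappa_{h_\infty}+1)$ gives $\Delta\varphi=-(\kappa_{h_\infty}+1)$, whose solutions satisfy $\kappa_{e^{2\varphi}h_\infty}=-e^{-2\varphi}$, again not constant. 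Worse, along constant conformal factors $\varphi\equiv\log c$ the right-hand side of \eqref{chvr} equals $-\pi\chi(\Sigma)\log c$, which is unbounded above as $c\to\infty$ since $\chi(\Sigma)<0$; a concave functional with no critical point has no maximum, so ``concave, hence the uniformizer is the global max'' cannot be run. The maximization statement you are implicitly invoking (and which the paper records after the definition of $\Volr$) holds only among metrics of the \emph{same area}, and for $s\neq 0$ the metric $h_\infty^s$ need not have the area $\pi(g-1)$ of the curvature $-4$ representative, so the constrained version does not directly yield the claim either.

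The missing ingredient is geometric, and it is exactly where the minimality of $\Sigma_s$ enters --- your argument never uses it, which is a warning sign. Since $\Tr(A^s)=0$, the Gauss equation \eqref{gauss} gives $\det A^s\leq 0$, hence $\kappa_{h^s}\leq -1$, and the identity $\kappa_{h_\infty^s}=4-8/(2+\kappa_{h^s})$ from the proof of Lemma \ref{lvk} yields the pointwise bound $\kappa_{h_\infty^s}\leq -4$. This is the key fact: it forces $\omega^s\geq 0$ by the maximum principle applied to \eqref{eccmp}, and then either (i) one integrates $|d\omega^s|^2$ by parts and substitutes $\Delta\omega^s=-4e^{2\omega^s}-\kappa_{h_\infty^s}$ into \eqref{chvr}, obtaining $\int_\Sigma\bigl(\omega^s e^{2\omega^s}-\tfrac14\kappa_{h_\infty^s}\omega^s\bigr)d\vol_{h_\infty^s}\geq 0$ with equality iff $\omega^s=0$; or (ii) one notes that $\kappa_{h_\infty^s}\leq -4$ together with Gauss--Bonnet gives $\mathrm{Area}(h_\infty^s)\leq\pi(g-1)$, so that passing to the curvature $-4$ metric is a fixed-area maximization followed by a scaling up by a constant $c\geq 1$, both of which increase $\Volr$. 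This is the content of the proof of \cite[Theorem 9]{CM} that the paper points to; without the curvature bound, the inequality is not established.
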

As a consequence, $\ddot{V}_R\geq \frac{d^2}{ds^2}\Volr(X,g^s;h_\infty^s)_{|s=0}.$
However, rather than adapting the results of \cite{CM}, we prove below that
the functional $\Vol_R-\Vol(K)$ is convex at $g=g_\geod$, with an explicit lower bound.

The following Polyakov-type formula holds for the conformal variation of the renormalized volume
(cf.\ \cite{GMS}):
\begin{equation}\label{chvr}
\Volr(X,g^s; e^{2\omega^s}h^s_\infty)-\Volr(X,g^s; h^s_\infty)
=-\tfrac{1}{4}\int_\Sigma \left(|d\omega^s|^2_{h^s_\infty}+2\kappa_{h^s_\infty}\omega^s\right)
d\vol_{h^s_\infty}.
\end{equation}

Let $\ddot\kappa$ be the second variation of $\kappa_{h^s_\infty}$ at $s=0$:
\[
\ddot\kappa = \partial_s^2(\kappa_{h^s_\infty})_{|s=0}.
\]

\begin{lemma}
Let $\omega^s\in C^\infty(\Sigma)$ such that $\kappa_{e^{2\omega^s}h^s_\infty}=-4$. 
Then $\omega^s=\omega_2 s^2+O(s^3)$
with 
\[\omega_2=-\tfrac{1}{2}(\Delta_{h_\infty^0}+8)^{-1}\ddot\kappa.\]
\end{lemma}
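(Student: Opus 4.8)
The statement asserts that the uniformizing conformal factor $\omega^s$ has vanishing first-order term and computes its second-order term via a linearization of the curvature-prescription equation. The plan is to expand both sides of the equation $\kappa_{e^{2\omega^s}h_\infty^s}=-4$ in powers of $s$ and match coefficients. First I would recall the conformal transformation law for the Gaussian curvature in dimension two: for any metric $h'$ on $\Sigma$,
\[
\kappa_{e^{2\omega}h'}=e^{-2\omega}\left(\kappa_{h'}-\Delta_{h'}\omega\right),
\]
with the sign convention making $\Delta_{h'}$ non-negative (this convention is forced by \eqref{vark}, where $\Delta_h+1$ is stated to be positive). Substituting $h'=h_\infty^s$ and imposing $\kappa_{e^{2\omega^s}h_\infty^s}=-4$ gives the equation
\[
\Delta_{h_\infty^s}\omega^s-\kappa_{h_\infty^s}=4e^{2\omega^s}.
\]

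Next I would Taylor-expand in $s$ around $s=0$, using that $\omega^0=0$ (since $h_\infty^0$ already has curvature $-4$, as established in Section \ref{cccm}) and $\dot\kappa=\partial_s(\kappa_{h_\infty^s})_{|s=0}=0$ (this is exactly the content of the computation in Lemma \ref{lTT}, where differentiating the Gauss equation gives $\partial_s\kappa(h^s)_{|s=0}=\Tr(\dA)=0$; since $h_\infty^s=\tfrac14 h^s((1+A^s)^2\cdot,\cdot)$ and at $s=0$ we have $A=0$, the first variation of $h_\infty^s$ is $\tfrac14\doh$, and the first variation of its curvature is $4\dot\kappa(h)=0$). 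At first order in $s$ the equation reads
\[
\Delta_{h_\infty^0}\dot\omega-4\dot\kappa(h_\infty^0)=8\dot\omega,
\]
i.e.\ $(\Delta_{h_\infty^0}-8)\dot\omega=0$; since $\Delta_{h_\infty^0}-8$ is invertible (its spectrum is non-negative, so $8$ is not an eigenvalue once one checks the kernel is trivial — but in fact $\Delta_{h_\infty^0}\geq 0$ and the equation $(\Delta-8)\dot\omega=0$ forces $\dot\omega=0$ because $\int|d\dot\omega|^2=8\int\dot\omega^2$ combined with the Poincar\'e-type inequality $\int|d\dot\omega|^2\geq 0$ is not by itself enough; rather one uses that the only solution is $0$ as the operator $\Delta-8$ has trivial kernel on a closed surface of curvature $-4$), we conclude $\omega^s=O(s^2)$ as claimed.

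Finally, at second order in $s$, writing $\omega^s=\omega_2 s^2+O(s^3)$ and using $\dot\omega=0$ so that the quadratic terms coming from products of first-order quantities drop out except for the curvature term, the equation becomes
\[
\Delta_{h_\infty^0}\omega_2-\tfrac12\ddot\kappa=8\omega_2,
\]
where the factor $\tfrac12$ arises from the Taylor coefficient $\partial_s^2/2$ applied to $\kappa_{h_\infty^s}$ and the right-hand side $4e^{2\omega^s}=4+8\omega_2 s^2+O(s^3)$ contributes $8\omega_2$. Solving gives $\omega_2=-\tfrac12(\Delta_{h_\infty^0}-8)^{-1}\ddot\kappa$.

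\textbf{Remark on the sign.} The statement as written has $(\Delta_{h_\infty^0}+8)^{-1}$; this corresponds to the opposite sign convention for the Laplacian (the geometer's negative Laplacian $-\Delta_{h'}\omega$ in the transformation law), under which $\Delta_{h_\infty^0}\leq 0$ and $\Delta_{h_\infty^0}+8$ is the invertible operator. With that convention the first-order equation is $(\Delta_{h_\infty^0}+8)\dot\omega=0$ forcing $\dot\omega=0$, and the second-order equation is $(\Delta_{h_\infty^0}+8)\omega_2=\tfrac12\ddot\kappa$, i.e.\ $\omega_2=\tfrac12(\Delta_{h_\infty^0}+8)^{-1}\ddot\kappa$; I would fix the sign by comparing with \eqref{vark}, and I expect the resolved formula to read $\omega_2=-\tfrac12(\Delta_{h_\infty^0}+8)^{-1}\ddot\kappa$ after carefully tracking that $\ddot\kappa$ itself carries a sign from the same convention. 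Reconciling the Laplacian sign convention consistently across \eqref{vark}, the conformal transformation law, and the resolvent is the main bookkeeping obstacle; the analytic content (smooth dependence on $s$, invertibility of $\Delta\pm 8$) is immediate from standard elliptic theory on the closed surface $\Sigma$.
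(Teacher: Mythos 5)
Your strategy is exactly the paper's: expand the equation $\kappa_{e^{2\omega^s}h_\infty^s}=-4$ in powers of $s$, use $\dot\kappa_\infty=0$ together with invertibility of the linearized operator to force $\omega_1=0$, then read off $\omega_2$ from the coefficient of $s^2$. The problem is the sign you assign to the conformal transformation law, and it is not a cosmetic issue. With the non-negative Laplacian $\Delta_{h'}=d^*d$ (the convention forced by \eqref{vark}, as you note), the correct formula is $\kappa_{e^{2\omega}h'}=e^{-2\omega}\left(\kappa_{h'}+\Delta_{h'}\omega\right)$, not $\kappa_{h'}-\Delta_{h'}\omega$; this is what the paper uses in \eqref{eccmp}. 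With the correct sign, the operator appearing at both first and second order is $\Delta_{h_\infty^0}+8$, which is positive and therefore invertible with no further discussion --- that is the entire analytic content of the lemma. With your sign you are led instead to $\Delta_{h_\infty^0}-8$, and your assertion that this operator ``has trivial kernel on a closed surface of curvature $-4$'' is unjustified and false in general: after rescaling, $8$ corresponds to the eigenvalue $2$ on the associated curvature $-1$ surface, and closed hyperbolic surfaces with $2$ in their Laplace spectrum exist. So as written, both the step $\omega_1=0$ and the solvability at order $s^2$ break down. Your closing remark correctly guesses that the resolved operator is $\Delta+8$ and the resolved answer is $-\tfrac12(\Delta_{h_\infty^0}+8)^{-1}\ddot\kappa$, but the proposed reconciliation (``$\ddot\kappa$ itself carries a sign from the same convention'') is wrong: $\ddot\kappa$ is the second $s$-derivative of the Gaussian curvature and is independent of any Laplacian convention. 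The actual bookkeeping is: $-4e^{2\omega^s}=\kappa_{h_\infty^s}+\Delta\omega^s$ gives at order $s^2$ the identity $-8\omega_2=\tfrac12\ddot\kappa+\Delta_{h_\infty^0}\omega_2$, whence $(\Delta_{h_\infty^0}+8)\omega_2=-\tfrac12\ddot\kappa$.

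A smaller slip: the first variation of $h_\infty^s=\tfrac14 h^s\left((1+A^s)^2\cdot,\cdot\right)$ is $\doh_\infty=\tfrac14\left(\doh+2h(\dA\cdot,\cdot)\right)$, not $\tfrac14\doh$; the $\dA$ term survives even though $A=0$ at $s=0$, because it enters linearly. Your conclusion $\dot\kappa_\infty=0$ still holds, but for the reason the paper gives: both $\doh$ and $h(\dA\cdot,\cdot)$ are trace- and divergence-free by Lemma \ref{lTT}, so \eqref{vark} applied to the full variation $\doh_\infty$ yields $\dot\kappa_\infty=0$.
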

\begin{proof}
The metrics $h^s$ are given by 
\[h^s_\infty=\tfrac{1}{4}h^s\left((1+A^s)^2\cdot,\cdot\right)
\]
hence the first-order variation is 
\begin{equation}\label{hid}
\doh_\infty=\tfrac{1}{4}(\doh+2h(\dA\cdot,\cdot)).
\end{equation}
By applying the formula \eqref{vark}, we get for the first-order variation of the 
Gaussian curvature of $h_\infty^s$:
\[2\dot{\kappa}_\infty=(\Delta_\infty+1)\Tr(\doh_\infty)+d^*\delta\doh_\infty.
\]
Both $\Tr(\doh_\infty)$ and $\delta\doh_\infty$ vanish by Lemma \ref{lTT}, 
thus $\dot{\kappa}_\infty=0$.

By the conformal change rule for the Gaussian curvature,
\begin{equation}\label{eccmp}
-4=\kappa_{e^{2\omega^s}h^s_\infty}=e^{-2\omega^s}(\kappa_{h_\infty^s}+\Delta_{h_\infty^s}\omega^s).
\end{equation}
Let $\omega^s=s\omega_1+s^2\omega_2+O(s^3)$ be the limited Taylor expansion of $\omega$. 
Write the expansion in $s$ up to errors of order $s^3$ of the above identity, 
using $\kappa_{h_\infty^s}=-4+O(s^2)$:
\begin{equation*}
-4=(1-2\omega_1s+O(s^2))\left(-4+O(s^2)+(\Delta_{h_\infty^0}+O(s))(s\omega_1+O(s^2))\right)
\end{equation*}
giving for the coefficient of $s$
\begin{align*}
\Delta_{h_\infty^0}\omega_1+8\omega_1=0.
\end{align*}
It follows by positivity of this elliptic operator on the closed surface 
$\Sigma$ that $\omega_1=0$, and returning to \eqref{eccmp},
\begin{equation*}
-4=(1-2\omega_2s^2+O(s^3))\left(-4+\tfrac{1}{2}\ddot\kappa s^2+O(s^3)
+(\Delta_{h_\infty^0}+O(s))(s^2\omega_2+O(s^3))\right).
\end{equation*}
The coefficient of $s^2$ must be $0$, hence
\[8\omega_2+\tfrac{1}{2}\ddot\kappa+\Delta_{h_\infty^0}\omega_2=0,\]
proving the lemma.
\end{proof}
Using this lemma, \eqref{chvr} gives for the quadratic term in the right-hand side:
\[
-\tfrac{1}{4}\int_\Sigma \left(|d\omega^s|^2_{h^s_\infty}+2\kappa_{h^s_\infty}\omega^s\right)d\vol_{h^s_\infty}
=-s^2\int_\Sigma (\Delta_{h_\infty^0}+8)^{-1}\ddot\kappa d\vol_{h^s_\infty} +O(s^3).
\]
By decomposing $\ddot\kappa$ in eigenmodes for $\Delta_{h_\infty^0}$, 
we see that in the integral the only surviving such term
is the zero-eigenmode, hence 
\begin{equation}\label{cor}
\Volr(X,g^s; e^{2\omega^s}h^s_\infty)-\Volr(X,g^s; h^s_\infty)
=-\tfrac{1}{8}s^2\int_\Sigma\ddot\kappa d\vol_{h^s_\infty} +O(s^3).
\end{equation}

\begin{lemma}\label{lvk}
The second variation of $\kappa_{h^s_\infty}$ equals $-8\Tr(\dA^2)$.
\end{lemma}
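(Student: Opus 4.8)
The plan is to compute $\ddot\kappa$ directly from the Gauss equation, which in the hyperbolic case reads $\det(A^s)=\kappa_{h^s}+1$, combined with the relation between $\kappa_{h^s}$ and $\kappa_{h^s_\infty}$ coming from the conformal/metric change $h^s_\infty=\tfrac14 h^s((1+A^s)^2\cdot,\cdot)$. First I would differentiate the Gauss equation twice at $s=0$, using that $A=0$ and $\dA$ is trace-free (Lemma \ref{lTT}). Since $\det(A^s)$ vanishes to second order in $s$ (as $A^s=s\dA+O(s^2)$ and the determinant of a $2\times 2$ matrix is quadratic in its entries), we get $\det(A^s)=s^2\det(\dA)+O(s^3)$, and $\det(\dA)=-\tfrac12\Tr(\dA^2)$ because $\dA$ is traceless $2\times 2$ (indeed $\det M=\tfrac12((\Tr M)^2-\Tr M^2)$). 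Hence $\ddot\kappa(h^s)=\partial_s^2\kappa_{h^s}|_{s=0}=-\Tr(\dA^2)$.

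The second step is to pass from $\ddot\kappa$ for $h^s$ to $\ddot\kappa$ for $h^s_\infty$. Writing $h^s_\infty=\tfrac14 B^s\cdot h^s$ where $B^s=(1+A^s)^2$ acts as an endomorphism, one has $B^s = 1 + 2s\dA + s^2(\dA^2 \cdot 2 + \ddot A)+\dots$ — more carefully $B^0=\mathrm{Id}$, $\dot B = 2\dA$, so $h^s_\infty$ and $\tfrac14 h^s$ agree to first order but differ at second order. I would express $\kappa_{h^s_\infty}$ in terms of $\kappa_{h^s}$ and the conformal-type correction; since $B^s$ is not a pure conformal factor one needs the general formula for the curvature of $h((\mathrm{Id}+S)\cdot,\cdot)$ for small symmetric $S$, but at $s=0$ the linear term $\dA$ is trace- and divergence-free, so its contribution to $\dot\kappa_\infty$ vanishes (already noted in the preceding lemma), and its contribution to $\ddot\kappa_\infty$ is a quadratic expression in $\dA$ of Weitzenböck type. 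The cleanest route is to use \eqref{vark} and its second variation, or to note $h^s_\infty=\tfrac14 h_t^s$ at $t\to\infty$ and track the $t$-dependence; alternatively use that $\kappa_{h_\infty^s}=\det(A^s_\infty)-1$ is not available directly, so I would instead directly compute: the factor $\tfrac14$ scales curvature by $4$, contributing nothing to the \emph{variation}, and the deformation $\dA$ contributes $-\Tr(\dA^2)$ from the $h^s$ side plus additional terms from differentiating $(1+A^s)^2$ twice. Collecting, the total should assemble to $-8\Tr(\dA^2)$, where the factor $8$ absorbs both the curvature rescaling by $4$ and the doubling from the squared Weingarten operator.

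The main obstacle I expect is bookkeeping the second-order conformal-and-deformation change of Gaussian curvature correctly: the map $h\mapsto h((1+A)^2\cdot,\cdot)$ mixes a genuine conformal piece (the trace part of $A$, which vanishes here) with a trace-free shear $\dA$, and the second variation of curvature under a trace-free deformation is governed by a formula quadratic in $\dA$ and its derivatives. Since $\dA$ is divergence-free and trace-free, integration against $d\vol$ should kill the derivative terms (they are total divergences), which is exactly what \eqref{cor} exploits — only the integral of $\ddot\kappa$ matters. So I would aim to show $\int_\Sigma\ddot\kappa\,d\vol_{h_\infty^0}=-8\int_\Sigma\Tr(\dA^2)\,d\vol_{h_\infty^0}$, possibly only as an integrated identity rather than pointwise, and check whether the pointwise statement in the lemma actually requires the stronger form or whether the subsequent argument uses only the integral. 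If pointwise equality is genuinely needed, I would carefully expand $\kappa_{h^s_\infty}$ via the Gauss equation for the \emph{funnel} metric $h_\infty^s=\lim e^{-2t}h_t^s$, using $\kappa_{h_t^s}=\det(A_t^s)-1$ together with the asymptotic behavior $A_t^s\to\mathrm{Id}$ as $t\to\infty$, which turns the rescaling into the clean factor and should produce the $-8$ without derivative terms at all.
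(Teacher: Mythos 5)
Your first step is correct and coincides with the paper's: expanding the Gauss equation $\det(A^s)=\kappa_{h^s}+1$ with $A^0=0$ and $\Tr(A^s)=0$ gives $\kappa_{h^s}=-1+s^2\det(\dA)+O(s^3)$ and $\det(\dA)=-\tfrac12\Tr(\dA^2)$, hence $\ddot\kappa_{h^s}|_{s=0}=-\Tr(\dA^2)$. The gap is in the second step. You never actually establish how $\kappa_{h^s_\infty}$ depends on $\kappa_{h^s}$; the sentence ``the total should assemble to $-8\Tr(\dA^2)$, where the factor $8$ absorbs both the curvature rescaling by $4$ and the doubling from the squared Weingarten operator'' is not a computation, and the proposed accounting of the $8$ as $4\times 2$ is not where the factor comes from. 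The paper invokes the closed-form identity
\[\kappa_{h_\infty^s}=\frac{4\kappa_{h^s}}{2+\kappa_{h^s}},\]
valid because $A^s$ is a Codazzi tensor for $h^s$ (this is Lemma 10 of \cite{CM}). This identity is the crucial point: it shows $\kappa_{h_\infty^s}$ is a \emph{pointwise algebraic} function of $\kappa_{h^s}$, so no derivative-of-$\dA$ terms can appear in $\ddot\kappa_\infty$ — which disposes of your worry about Weitzenb\"ock-type quadratic terms and about whether only the integrated identity holds. The lemma is genuinely pointwise. The chain rule then gives, since $\dot\kappa_{h^s}|_{s=0}=0$ and $\kappa_{h^0}=-1$, the factor $\frac{d}{d\kappa}\bigl(\frac{4\kappa}{2+\kappa}\bigr)\big|_{\kappa=-1}=\frac{8}{(2+\kappa)^2}\big|_{\kappa=-1}=8$, whence $\ddot\kappa_\infty=8\,\ddot\kappa_{h^s}|_{s=0}=-8\Tr(\dA^2)$.

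Your proposed fallback — computing $\kappa_{h_\infty}=\lim_{t\to\infty}e^{2t}\kappa_{h_t}=\lim_{t\to\infty}e^{2t}(\det A_t-1)$ from the funnel formulas — would in fact work and reproduces exactly the identity above: with $\Tr A=0$ one finds
\[\det A_t-1=\frac{\det A-1}{\cosh^2 t+\det A\,\sinh^2 t},\qquad
\lim_{t\to\infty}e^{2t}(\det A_t-1)=\frac{4(\det A-1)}{1+\det A}=\frac{4\kappa_h}{2+\kappa_h}.\]
But as written you only name this route without carrying it out, so the factor of $8$ — the entire content of the lemma beyond your (correct) first step — is asserted rather than proved.
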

\begin{proof}
We have (see \cite{CM}, proof of Lemma 10):
\[\kappa_{h_\infty^s}=\frac{4\kappa_{h^s}}{2+\kappa_{h^s}}=4-\frac{8}{2+\kappa_{h^s}}.
\]
Thus, since $\partial_s\kappa_{h^s}|_{s=0}=0$, we get 
$\partial_s^2\kappa_{h_\infty^s}|_{s=0}=8\ddot\kappa_{h^s}|_{s=0}$. 

From the Gauss equation \eqref{gauss} and the fact that $\Tr(A^s)=0$ we deduce
\[\Tr((A^s)^2)=-2\kappa_{h^s}-2.\]
 Since $A=0$ at $s=0$, we get $\ddot\kappa_{h^s}|_{s=0}=-\Tr(\dA^2)$.
\end{proof}
\section{Proof of Theorem \ref{main}}
From \eqref{cor}, Theorem \ref{poz} and Lemma \ref{lvk}, we get 
\[\ddot{\Volr}(X,g^s)\geq \tfrac{1}{4}\int_\Sigma \Tr(\dA^2)d\vol_h +\int_K\|q\|^2.
\]
We would like to translate this into 
an inequality in terms of the Weil-Petersson metric on the Teichm\"uller space 
$\cT_\Sigma$. Consider a smooth slice
\[
\cg:\cT_\Sigma\to \cM_{-1}(X)
\]
with $\cg(h)=g_\geod$, where $h$ is the hyperbolic metric on $\Sigma$ 
viewed as the totally geodesic boundary of the convex core of 
$(X,g_\geod)$. We can assume, up to pulling back by a family of 
diffeomorphisms of $X$, that the boundary of $K$ is minimal with respect to
$g^s:=\cg(s)$ for every $s\in\cT$, and moreover that the geodesics normal to $\partial K$ are
the same for all $s$. Let $h^s$ be the metric
induced on $\Sigma=\partial K$ by restriction of $g^s$.
The tangent map to this restriction is the linear 
map 
\begin{align*}
R:T_h\cT\longrightarrow T_h\cT, &&\dg\longmapsto \doh 
\end{align*}
of finite-dimensional vector spaces.
By composing with the map $\doh\mapsto V$, where $V$ is the solution to the boundary problem
\eqref{Dp}, we obtain a linear map
\begin{align*}
Q:T_h\cT\longrightarrow T_{g_\geod} \cM_{-1}(K),&&  \dg\longmapsto q=\dg+L_V g.
\end{align*}

\begin{prop}
There exists a constant $c>0$, independent of the metrics $g^s$, such that
\[\|q\|_{L^2(K)}\geq c\|\dot{h}\|_{L^2(\Sigma)}.\]
\end{prop}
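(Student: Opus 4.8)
The plan is to prove this by a compactness argument, exploiting the linearity and finite-dimensionality already in place. Both $T_h\cT$ and $T_{g_\geod}\cM_{-1}(K)$ are finite-dimensional vector spaces, and the maps $R$ and $Q=(\text{solve }\eqref{Dp})\circ R$ are linear; moreover the whole construction depends on the chosen slice $\cg$, but we are told we want a constant $c$ independent of the family. So the natural statement is: the composite map $\dg\mapsto q$ is injective with a lower bound on its smallest singular value, uniformly over a compact set of data. First I would reduce the claim to showing that $Q$ is injective at the single point $g_\geod$, i.e.\ that $q=0$ forces $\doh=0$; the uniform constant then follows because the slice $\cg$ and all the elliptic operators involved (the boundary-value problem \eqref{Dp}, the restriction map $R$) depend continuously on finitely many parameters, so the smallest singular value of $Q$ is a continuous positive function on a compact parameter set and hence bounded below.

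So the heart of the matter is the injectivity of $Q$. Suppose $q=\dg+L_Vg=0$ on $K$. Then $\dg=-L_Vg=-2\delta^*V$ is a pure gauge deformation of $g$ on $K$, generated by a vector field $V$ that (by the boundary conditions in \eqref{Dp}) is tangent to $\Sigma$ along $\Sigma$ and satisfies $L_\nu V\perp\Sigma$. I would then restrict to $\Sigma$: the induced variation is $\doh = (\dg)|_\Sigma = -(L_Vg)|_\Sigma$, and since $V|_\Sigma\in T\Sigma$ this equals $-L_{V|_\Sigma}h$, the Lie derivative of $h$ along a vector field on $\Sigma$. But recall that we have normalized the slice so that $\doh$ is divergence-free (and by Lemma \ref{lTT} also trace-free), i.e.\ $\doh$ is a transverse-traceless tensor on the closed hyperbolic surface $(\Sigma,h)$. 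A TT-tensor that is also a Lie derivative must vanish: integrating $\langle \doh,\doh\rangle = -\langle L_{V|_\Sigma}h,\doh\rangle = 2\langle \delta^h\doh, V|_\Sigma\rangle$ over $\Sigma$ and using $\delta^h\doh=0$ gives $\doh=0$. This is exactly the standard fact that the space of TT-tensors is a complement to the tangent space of the $\Diff(\Sigma)$-orbit, which identifies it with $T_h\cT$; since $\dg\in T_h\cT$ was represented by such a TT-tensor $\doh$ and we've shown $\doh=0$, and the map $R$ was set up precisely as a map on $T_h\cT$, we conclude $\dg=0$ in $T_h\cT$.

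The main obstacle I anticipate is getting the \emph{uniformity} of $c$ cleanly, i.e.\ making precise the sense in which ``independent of $g^s$'' is meant and over what the constant is uniform. The pointwise injectivity argument above is robust, but the constant $c$ a priori depends on the metric $g^s$ through the coefficients of the Laplace-type operator in \eqref{Dp} and through the slice $\cg$. The right framing is that we work along the fixed family (or fixed slice) $\cg$, $s$ ranges over a compact neighborhood of $0$ in $\cT_\Sigma$, and $\dg$ over the unit sphere of the (locally trivial, finite-rank) bundle $T\cT_\Sigma$; then $(s,\dg)\mapsto \|q\|_{L^2(K)}$ is continuous and, by the injectivity just established, strictly positive, hence bounded below by some $c>0$ on this compact set. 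A secondary technical point is continuity of $s\mapsto V$ in $L^2(K)$ norm, which follows from continuity of the resolvent $(\nabla^*\nabla+2)^{-1}$ (Proposition \ref{bvp}) in the coefficients together with smoothness in $s$ of the right-hand side $-(\delta+\tfrac12 d\Tr)\dg$; both are standard elliptic facts. Finally one notes that $R$ is itself an isomorphism onto $T_h\cT$ for the same reason (a TT-tensor on $X$ restricting to zero on $\Sigma$ would be shown trivial by an analogous argument), so composing does not destroy the lower bound.
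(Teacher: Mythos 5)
Your proof is correct and follows essentially the same route as the paper: the key step in both is that $q=0$ forces $\doh=-L_{V|_\Sigma}h$, which must vanish because the divergence-free tensor $\doh$ is $L^2$-orthogonal to any Lie derivative of $h$, and the uniform constant then comes from finite-dimensionality of $T_h\cT$ (the paper phrases this as $R$ factoring through $Q$ via a bounded map on the finite-dimensional range, rather than your compactness-of-the-unit-sphere formulation, but the content is the same). Your extra worry about uniformity in $s$ goes slightly beyond what the paper actually claims, since the Hessian is only evaluated at $g_\geod$ and ``independent of $g^s$'' just means independent of the deformation direction $\dg$.
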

\begin{proof}
If $q=0$, it follows that $\doh+L_V h=0$ as a tensor on $\Sigma$. Since
$\doh$ is divergence-free, the tensors $\doh$ and $L_V h$ are orthogonal in
$L^2$, so they both must vanish. Hence, if $Q(\dg)=0$ we get that $R(\dg)=0$.
Therefore $R$ factors through $Q$, i.e., there exists a linear map
\begin{align*}
T:\mathrm{range}(Q)\longrightarrow T_h\cT,&&q\longmapsto \doh
\end{align*}
so that $T\circ Q=R$. The map must be bounded since $\cT$ is of finite dimension.
\end{proof}

\begin{remark}
On the quasi-fuchsian space, the Hessian of $\Volr$ at any point of the fuchsian locus equals
\[\ddot{\Volr}=\tfrac18 \|\dot{h}^+-\dot{h}^-\|^2,
\]
Thus the Hessian is only positive \emph{semi}definite in that case. It becomes however positive 
definite when we restrict 
the renormalized volume functional to a Bers slice, i.e., we keep fixed one conformal boundary.
In that case it equals $1/8$ of the Weil-Petersson metric, by a result of Krasnov and Schlenker 
\cite{KS08}. 
\end{remark}

\section{Proof of Theorem \ref{coro}}

Schlenker \cite[Theorem 1.1]{Sch} proved the following inequality between the renormalized volume and
the volume of the convex core of quasi-fuchsian manifolds:
\begin{align}\label{inerc}
\Volr(X,g^s)\leq \Vol(C(X,g^s)) - \tfrac14 L_m(l)
\end{align}
where $L_m(l)\geq 0$ is the length of the measured bending lamination
of the convex core. The proof consists in identifying the right-hand side
with the renormalized volume of $X$
starting from the boundary of the convex core. The induced metric at infinity
is Thurston's grafting metric on $\Sigma$, which is known to be of class $C^{1,1}$,
of non-positive curvature, and bounded below by 
the Poincar\'e (i.e., hyperbolic) metric on $\Sigma$. 
These properties imply \eqref{inerc}. 

This inequality carries over with the same proof
to convex co-compact manifolds, as was noted in \cite{BC}. In fact,  
Schlenker's proof remains valid for manifolds with funnels and with rank $2$-cusps
(i.e., geometrically finite without rank $1$-cusps). Hence we may safely use \eqref{inerc}
in our setting.

At the initial metric $g_\geod$ the two sets $K$ and $C(X,g_\geod)$ are the same, 
in particular they share the same volume. Moreover, $\Volr(X,g_\geod)=\Vol(K)$ since
the boundary of $K$ is totally geodesic with respect to $g_\geod$.
Since by Theorem \ref{main} 
the Hessian of $\Volr(X,g)$ is positive definite at $g_\geod$, the same can be said 
about the Hessian of the functional $\Vol(C(X,g))$ at $g_\geod$, using the inequality \eqref{inerc}.

\end{document}